\newtheoremstyle{slanted}{3pt}{3pt}{\slshape}{}{\bfseries}{.}{ }{}
\theoremstyle{slanted}
\newtheorem{theorem}{Theorem}[section]
\newtheorem{lemma}[theorem]{Lemma}
\newtheorem{conjecture}[theorem]{Conjecture}
\newtheorem{openprob}[theorem]{Open Problem} 
\crefname{openprob}{Open Problem}{Open Problems}
\Crefname{openprob}{Open Problem}{Open Problems}}
\theoremstyle{definition}
\newtheorem{eg}[theorem]{Example}
\newcommand\cA{{\mathcal A}}
\newcommand\re{{\mathbb R}}
\DeclareMathOperator\sument{sum}
\DeclareMathOperator{\tr}{tr}
\title{On cores of distance-regular graphs}
\author{
  Annemarie Geertsema\thanks{Korteweg-de Vries Institute for Mathematics, University of Amsterdam, Amsterdam, The Netherlands}
  \and
  Chris Godsil\thanks{Department of Combinatorics \& Optimization, University of Waterloo, Waterloo, Ontario, Canada. (\texttt{cgodsil@uwaterloo.ca})}
  \and
  Krystal Guo\thanks{Korteweg-de Vries Institute for Mathematics, University of Amsterdam, Amsterdam, The Netherlands. (\texttt{k.guo@uva.nl})}
}
\begin{document}
\maketitle

\begin{abstract}
    We look at the question of which distance-regular graphs are \emph{core-complete}, meaning they are isomorphic to their own core or have a complete core. We build on Roberson's homomorphism matrix approach by which method he proved the Cameron-Kazanidis conjecture that strongly regular graphs are core-complete. We develop the theory of the homomorphism matrix for distance-regular graphs of diameter $d$. 
    
    We derive necessary conditions on the cosines of a distance-regular graph for it to admit an endomorphism into a subgraph of smaller diameter $e<d$. As a consequence of these conditions, we show that if $X$ is a primitive distance-regular graph where the subgraph induced by the set of vertices furthest away from a vertex $v$ is connected, any retraction of $X$ onto a diameter-$d$ subgraph must be an automorphism, which recovers Roberson's result for strongly regular graphs as a special case for diameter $2$. 
    
    We illustrate the application of our necessary conditions through computational results. We find that no antipodal, non-bipartite distance-regular graphs of diameter 3, with degree at most $50$ admits an endomorphism to a diameter 2 subgraph.  We also give many examples of intersection arrays of primitive distance-regular graphs of diameter $3$ which are core-complete. Our methods include standard tools from the theory of association schemes, particularly the  spectral idempotents. 

    \vspace{5pt}
    \noindent\textit{Keywords: algebraic graph theory, distance-regular graphs, association schemes, graph homomorphisms} 

    \noindent\textit{MSC 2020: Primary 05E30; Secondary 05C15, 05C50 } 
\end{abstract}
    
\section{Introduction}\label{sec:intro}

The \textsl{core} of a graph $X$ is the graph with the least number of vertices which is homomorphically equivalent to $X$. It is known that the core of $X$, denoted $X^{\bullet}$, is unique up to isomorphism and is an induced subgraph of $X$. It has the same chromatic number and clique number as $X$. The core of a graph also inherits symmetries from the graph itself; the cores of vertex-transitive graphs are vertex-transitive and the cores of arc-transitive graphs are arc-transitive. 
This interplay between symmetry and homomorphism motivates our focus on the cores of highly symmetric graphs, such as distance-regular graphs. 

A graph $X$ is said to be \textsl{core-complete} if $X$ is isomorphic to its core $X^{\bullet}$ or $X^{\bullet}$ is a complete graph. There are many classes of graphs which have been shown to be core-complete, including  rank-3 graphs \cite{CamKaz2008}, distance-transitive graphs \cite{GodRoy2011} and block graphs of Steiner systems and orthogonal arrays \cite{GodRoy2011}. A graph $X$ is a \textsl{pseudocore} if every proper endomorphism of $X$ is a coloring. Another example in this family of results is \cite{HuaHuaZha2015}, which shows the stronger statement, alternating forms graphs are pseudocores. In \cite{CamKaz2008}, the Cameron and Kazanidis conjecture that the class of strongly regular graphs, which can be seen as the combinatorial relaxation of rank-3 graphs, are core-complete. This was proven by Roberson in \cite{Rob2019}; he in fact shows the stronger statement that primitive strongly regular graphs are pseudocores. 

A commonality of these results is that the classes of graphs studied are all distance-regular and thus the following is a natural question:

\begin{openprob}\label{op:drg} Are all distance-regular graphs core-complete? 
\end{openprob}

More generally, we can ask which distance-regular graphs are core-complete or are pseudocores.  Hell and Ne\v{s}et\v{r}il show that it is NP-complete to recognize cores of non-bipartite graphs, even amongst $3$-colourable graphs in \cite{HelNes1992}. 
 Thus it is interesting to find large graph classes where the core has a known, well-behaved form — for example, those that are core-complete.

In this paper, we rigorously develop Roberson's idea of a homomorphism matrix from \cite{Rob2019} for distance-regular graphs and generalize his result about the core-completeness of strongly regular graphs to a statement about primitive distance-regular graphs of diameter $d$ where $\Gamma_d(v)$ induces a connected graph for each vertex $v$. 
For $X,Y$ distance-regular graphs with the same intersection array and an eigenvalue $\theta_j$, we define a 
\textsl{homomorphism matrix} using the spectral idempotents of the distance-regular graphs and information from the homomorphism. As a key consequence of this construction, we show in \cref{lem:help-eq-main-thm} that if $X$ is a connected distance-regular graph of diameter $d$, any endomorphism $\phi$ of $X$ whose image $\phi(X)$ has diameter $e$ must satisfy a linear relation on the values $w(e-1,d)$, $w(e,d)$, and $w(e+1,d)$, which are the \textsl{cosines} of the distance-regular graph, a classical tool from the literature which we will explicate in \cref{sec:cos}. This linear relation arises from partitioning $\Gamma_1(v)$ with respect to a geodetic pair of vertices $u,v$, and leverages both combinatorial structure and spectral information.

Using this necessary condition on the cosines for the existence of  a homomorphism to a subgraph of diameter $e$, we show that  if  $X$ is primitive and the subgraph induced by its furthest layer remains connected, any retraction of $X$ onto a diameter $d$ subgraph is an automorphism (\cref{thm:maintheorem}). This strengthens Roberson's result that strongly regular graphs are core-complete. We also show that if $X$ admits an endomorphism to a strictly smaller-diameter subgraph, namely $e < d$, then there exist non-negative integer parameters $\alpha, \beta, \gamma$ satisfying several constraints on the intersection numbers and eigenvalues in \cref{thm:smallerdiam}. 

We also apply our results to the special case in which $X$ has a \textsl{complete core}. We prove that its smallest eigenvalue satisfies $\theta_d \le -2$ and, if $\theta_d = -2$, then every homomorphism from $X$ to its complete core forces a strict limit on how vertices at distance two can map to the same color or image-vertex. Concretely, the equality case of \cref{thm:e=1} gives us that in any coloring of $X$, each color class can have only $\tfrac{k}{c_2}$ vertices at distance two from a given vertex $x$.

Finally, to illustrate the practical significance of these theorems, we performed a series of computations on distance-regular graphs of diameter $3$ using mostly \texttt{SageMath}\cite{sage}. We give feasible intersection arrays of  primitive distance-regular graphs of diameter $3$ which could have an endomorphism to a diameter  $2$ subgraph. We are also able to give many feasible intersection arrays of primitive distance-regular graphs of diameter $3$ which must be core-complete. We find an absence -- among those with degree $k \leq 50$ -- of any antipodal (and non-bipartite)  distance-regular graphs of diameter $3$ that can have an endomorphism to a diameter $2$ subgraph. These findings show the power of our main theorem and lead us to the following conjecture.

\begin{conjecture}\label{conj:antipodaldiam3}
An antipodal and not bipartite distance-regular graph of diameter $3$ has no endomorphism to a subgraph with diameter $2$.
\end{conjecture}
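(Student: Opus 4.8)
The plan is to argue by contradiction: assume that a non-bipartite antipodal distance-regular graph $X$ of diameter $3$ admits an endomorphism onto an induced subgraph of diameter $2$, and then push the necessary conditions of \cref{thm:smallerdiam} against the very rigid structure of antipodal graphs of diameter $3$ (note that \cref{thm:maintheorem} does not apply here: $X$ is imprimitive, and $\Gamma_3(v)$, being a coclique on $r-1$ vertices, is disconnected as soon as $r\ge 3$). First I would pass to a retraction: iterating the endomorphism produces an idempotent endomorphism $\phi$ whose image $Y$ lies inside the original diameter-$2$ subgraph and is a retract of it, and since distances inside a retract coincide with distances in the containing graph, $Y$ itself has diameter at most $2$; the case of diameter $\le 1$ is the complete-core situation, which is not what is being asserted here and is the subject of \cref{thm:e=1}, so I assume the diameter of $Y$ is exactly $2$. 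Applying \cref{lem:help-eq-main-thm} and \cref{thm:smallerdiam} with $d=3$ and $e=2$ then yields nonnegative integers $\al,\be,\ga$ together with a linear relation tying them to the cosine values $w(1,3),w(2,3),w(3,3)$ and to the intersection numbers of $X$.

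The second step is to make all of these quantities explicit for the antipodal family. Such an $X$ is an $r$-fold antipodal cover of $K_{k+1}$, with intersection array $\{k,(r-1)c_2,1;1,c_2,k\}$; hence $a_3=0$, its four eigenvalues are $k,\theta_1,-1,\theta_3$ with $\{\theta_1,\theta_3\}$ the roots of $x^2-(a_1-c_2)x-k=0$, and in particular $\theta_1\theta_3=-k$. Since $\theta_3<-1$, the eigenvalue $\theta_3$ is not an eigenvalue of the quotient $K_{k+1}$, so its spectral idempotent vanishes on the space of functions that are constant on antipodal classes; translated into the cosine sequence of $\theta_3$ this reads $w(3,3)=-\tfrac{1}{r-1}$, and combined with the terminal recurrence $k\,w(2,3)=\theta_3\,w(3,3)$ (forced by $a_3=0$) it gives $w(2,3)=-\tfrac{\theta_3}{k(r-1)}$ and $w(1,3)=\tfrac{\theta_3}{k}$. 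Substituting this cosine triple into the relation of \cref{thm:smallerdiam} and clearing denominators leaves a single linear equation in $\theta_3$ with rational coefficients built from $k$, $r$, $\al$, $\be$, $\ga$; solving it for $\theta_3$ and feeding the result into the quadratic $x^2-(a_1-c_2)x-k=0$ produces one Diophantine constraint in $k,r,c_2,\al,\be,\ga$. I would first dispose of the case where $\theta_1,\theta_3$ are irrational: then the linear equation forces its two sides to vanish separately, and I expect the remaining constraints of \cref{thm:smallerdiam} — which also involve $\theta_1$ and the multiplicities — to be inconsistent with nonnegativity and integrality of $\al,\be,\ga$. In the complementary case $\theta_3$ is a negative integer dividing $k$, and a finite analysis over the divisors of $k$, using $\theta_1+\theta_3=a_1-c_2$ and $\theta_3\ne -k$ (this last step is exactly where non-bipartiteness is used), should eliminate every surviving possibility.

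The step I expect to be the real obstacle is precisely this integral-eigenvalue regime, and above all the case $r=2$ of Taylor graphs: there the cosine sequence of $\theta_3$ degenerates to $(1,t,-t,-1)$ with $t=\theta_3/k\in\rats$, the linear relation collapses to an equation of the form $(\al-\be)\,t=\ga+C_0$ for an explicit rational $C_0$, and the purely arithmetic constraints look too weak to finish on their own. To close such cases I would extract additional structure directly from the homomorphism matrix. For an antipodal distance-regular graph of diameter $3$ the spectral idempotent $E_2$ of the eigenvalue $-1$ is, up to scalars, the difference between the projection $\tfrac{1}{r}(I+A_3)$ onto functions constant on antipodal classes and the all-ones projection; the intertwining relations between $E_2$ and the homomorphism matrix should then force compatibility with the antipodal partition — that is, each antipodal class of $X$ maps under $\phi$ into a single clique of $Y$ of size at most $r$. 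Feeding this back, by pushing the geodesic configuration $u,v$ underlying \cref{lem:help-eq-main-thm} together with its antipodal mates into the diameter-$2$ graph $Y$, should over-determine $\al,\be,\ga$ and rule out the remaining parameter sets. A secondary, more routine difficulty is keeping the divisor-of-$k$ analysis finite; for that I would rely on the classical feasibility conditions for antipodal diameter-$3$ intersection arrays — integrality and positivity of the multiplicities together with the Krein and absolute bounds — which should reduce the work to finitely many explicit shapes to check by hand.
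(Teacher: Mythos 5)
This statement is \cref{conj:antipodaldiam3}, which the paper explicitly leaves as a \emph{conjecture}: it offers no proof, only computational evidence, namely that among feasible antipodal non-bipartite intersection arrays with $k\le 50$ none admits a triple $\alpha,\beta,\gamma$ satisfying \cref{thm:smallerdiam} with $e=2$ (\cref{tab:antipodal3}). So there is no proof in the paper to compare yours against, and the real question is whether your proposal closes the conjecture. It does not. Your explicit computations are correct and are exactly the right specialization of the paper's machinery: the array $\{k,(r-1)c_2,1;1,c_2,k\}$, $a_3=0$, the quadratic $x^2-(a_1-c_2)x-k=0$ for $\theta_1,\theta_3$, and the cosine sequence $w(1,3)=\theta_3/k$, $w(2,3)=-\theta_3/(k(r-1))$, $w(3,3)=-1/(r-1)$ all check out (the last because $E_3$ kills functions constant on fibres). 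Substituting these into condition (d) of \cref{thm:smallerdiam} is precisely what the paper's computation does array by array.

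The gap is that every step beyond this substitution is conditional. You write that in the irrational-eigenvalue case you ``expect'' the remaining constraints to be inconsistent, that the divisor analysis ``should'' eliminate the integral case, and that the intertwining of the homomorphism matrix with $E_2$ ``should'' force each antipodal fibre to map into a clique of $Y$. None of these is established, and the last claim is the crux: vertices at distance $3$ in $X$ can a priori be identified or sent to vertices at distance $1$ or $2$ in a diameter-$2$ image, and nothing in \cref{lem:mr-er} or its analogue for $E_2$ obviously pins this down — proving such a fibre-compatibility statement would itself be a new theorem, not a corollary of the paper's lemmas. You also correctly identify the Taylor-graph case $r=2$, where the cosines degenerate to $(1,t,-t,-1)$, as the place where the arithmetic constraints are weakest; that is exactly where a genuinely new idea is needed, and the proposal does not supply one. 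A smaller issue: iterating the endomorphism to get an idempotent retraction can shrink the image to diameter $1$, so to rule out all endomorphisms onto diameter-$2$ subgraphs you should instead use the paper's decomposition of an endomorphism as an automorphism composed with a retraction onto an isomorphic copy of the image, which preserves the diameter $e=2$. As it stands, your plan reproduces the feasibility test the authors already ran computationally, and leaves open precisely the cases that make this a conjecture rather than a theorem.
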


We now describe the organization of the paper. 
We give necessary background definitions in \cref{sec:prelim}. 
In
\cref{sec:hom-mat}, we develop the homomorphism matrix for homomorphisms between distance-regular graphs. 
We apply this to look at endomorphisms  of distance-regular graphs in \cref{sec:end-drg}.
We then use  this machinery to look at endomorphism from distance-regular graphs to their cores in \cref{sec:cores}.
We finish with further open problems and connections in \cref{sec:conclusion}.

\section{Preliminaries}\label{sec:prelim}

We will give some preliminaries and definitions for graph homomorphisms, distance-regular graphs and the associated cosine sequences, while also establishing the notation for this paper.  

\subsection{Graph homomorphisms}

We begin with some preliminaries about graph homomorphisms.  We follow the notation and definitions from \cite[{\S}6]{GR}, to which we defer to further background.
For graphs $X,Y$ a map $\phi:V(X) \rightarrow V(Y)$ is a \textsl{(graph) homomorphism} if adjacent vertices of $X$ are mapped to adjacent vertices of $Y$.  An \textsl{endomorphism} is a homomorphism from $X$ to $X$. A graph $X$ is a \textsl{core} if every endomorphism of $X$ is an automorphism. A subgraph $Y$ of $X$ is said to be a \textsl{core of $X$} if $Y$ is a core and there is a homomorphism from $X$ to $Y$. We see that $Y$ must then be an induced subgraph. Every graph $X$ has a unique core, up to isomorphism, which is denoted by $X^{\bullet}$. We will consider $X^{\bullet}$ to be an induced subgraph of $X$.

A \textsl{retraction} is a homomorphism $f$ from $X$ to a subgraph $Y$ of $X$ such that the restriction of $f$ to $V(Y)$ is the identity map. In this case, $Y$ is said to be a \textsl{retract} of $X$.

We say that a subgraph $Y$ of $X$ is \textsl{isometric} if $d_Y(u,v) = d_X(u,v)$ for all vertices $u,v$ in $Y$. Every retract of $X$ is an isometric subgraph. It is known that if $\phi$ is a non-trivial retraction of $X$, then there exist two vertices $u$ and $v$ at distance $2$ in $X$ such that $\phi(u) = \phi(v)$. If $\phi$  is an endomorphism of $X$, a pair of vertices $u,v$ is \textsl{geodetic} if $d_X(u,v) = d_{\phi(X)}(\phi(u),\phi(v))$.

\subsection{Distance-regular graphs}

Since we will look at cores and homomorphisms of distance-regular graphs, we will need some preliminaries. We note that the purpose of this section is not to give full definitions and background, but rather to establish notation; we defer to the standard text \cite{BCN} for further background on distance-regular graphs and association schemes. We will use functional notation for the entries of matrices; $M(u,v)$ denotes the $(u,v)$ entry of $M$. The distance between $u,v$ in $X$ is denoted $d_X(u,v)$, where the subscript may be omitted when the context is clear.

A connected graph is said to be \textsl{distance-regular} if there exist numbers $b_i,\ c_i$ for $i \geq 0$ such that for any two vertices $u$ and $v$ at distance $i$, the number of neighbours of $v$ at distance $i-1$ from $u$ is $c_i$ and the number of neighbours of $v$ at distance $i+1$ from $u$ is $b_i$. This definition implies that 
\[ b_0 = c_i + a_i + b_i \]
and that there exists a number $p_{ij}^k$ such that for every pair of vertices $u,v$ at distance $k$, there are $p_{ij}^k$ vertices which are simultaneously at distance $i$ from $u$ and at distance $j$ from $v$, for $i,j,k \in \{0,\ldots, d\}$ where $d$ is the diameter of the graph. Suppose $X$ is a distance-regular graph of diameter $d$. The list of parameters $\{b_0,b_1,...,b_{d-1} ;\ c_1,c_2,...,c_d\}$ is called the \textsl{intersection array} of the graph, and the numbers $p_{ij}^k$ are the \textsl{intersection numbers} for the graph.

We define the \textsl{distance graphs} $X_i$ of $X$ as the graphs with vertex set $V(X)$ and two vertices adjacent if and only if they are at distance $i$ in $X$. Let $A = A(X)$ and define distance matrices $A_i(X) = A(X_i)$ for $i =1,\ldots, d$, and $A_0 =I$, the identity matrix. By definition of a distance-regular graph, the matrices $\{A_0,A_1=A, A_2, \ldots, A_d\}$ satisfy:
\[\sum_{k=0}^d A_k = J, \]
where $J$ denotes the all-ones matrix, and 
\[A_i A_j = \sum_{k = 0}^d p_{ij}^k A_k.\]
In fact, the matrices $\{A_0,...,A_d\}$ form an \textsl{association scheme}. 

The Schur product (or element-wise product) of matrices $M$ and $N$ is defined as:
\[(M \circ N)(a,b) = M(a,b) \cdot N(a,b).\]
Since $A_i \circ A_j = \delta_{ij} A_j$ where $\delta_{ij}$ is the Kronecker delta, we see that $\{A_0,...,A_d\}$ are idempotents with respect to the Schur product and thus generate a matrix algebra, $\cA$, which is closed under Schur product. A set of symmetric and pairwise commuting matrices can be simultaneously diagonalized. A distance-regular graph has exactly $d+1$ distinct eigenvalues $\theta_0,\ldots, \theta_d$ and we let  $E_i$ denote the idempotent (with respect to the usual matrix multiplication) projector onto the $\theta_i$-eigenspace. We refer to $\{E_0,...,E_d\}$ as the \textsl{spectral idempotents} of $X$. Since $\{E_0,...,E_d\}$ also forms a basis for $\cA$, we have two $(d+1)\times (d+1)$ matrices, $P$ and $Q$, which give change-of-basis equations as follows:
\begin{align}
    A_i & = \sum_{j=0}^d P(j,i) E_j , \label{Pij} \\
    E_j & = \frac{1}{n} \sum_{i=0}^d Q(i,j) A_i .\label{Qij} 
    \end{align}
    The matrices $P$ and $Q$ are called the \textsl{eigenmatrices} of the scheme. 

    A distance-regular graph $X$ of diameter $d$ is said to be \textsl{primitive} if the graphs $X_i, i\in\{1,\ldots,d\}$ are all connected, and \textsl{imprimitive} otherwise. If $X_d$ is the disjoint union of cliques of the same size,  the graph  $X$ is said to be \textsl{antipodal} and the cliques in $X_d$ are said to be \textsl{fibres} of $X$. If the valency of  an imprimitive distance-regular graph $X$ is at least $3$, then $X$ is either bipartite or antipodal (see \cite[Theorem 4.2.1]{BCN}).

    \subsection{Sequences of cosines}\label{sec:cos}

We also need to define the cosine sequence of a distance-regular graph. These are implicitly defined in \cite[{\S}4.1]{BCN} and can be found explicitly in \cite[{\S}13]{G93}. Let $X$ be a distance-regular graph of diameter $d$. The spectral idempotent $E_j$ can be written as a linear combination of $A_0,\ldots, A_d$ as in \eqref{Qij} and thus the entry $E_j(x,y)$ depends only on the distance between $x$ and $y$. Further, $E_j$ has a constant diagonal. With this in mind, we say that
the \textsl{$r$-th cosine with respect to $\theta_j$} is given by 
\[
w(r,j) = \frac{E_j(x,y)}{E_j(x,x)}
\]
for $x,y$ vertices at distance $r$ in $X$. Since the spectral idempotents $E_r$ have constant diagonal and $\tr(E_r)=m_r$, where $m_r$ is  the multiplicity of eigenvalue $\theta_r$, we can write 
\[w(r,j) = \frac{n E_j(x,y)}{m_r} ,
\]
where $n$ denotes the number of vertices.
The \textsl{sequence of cosines with respect to $\theta_j$} is 
\[
(w(0,j), w(1,j), \ldots, w(d,j)).
\]
Since $E_j$ has a constant diagonal, we can think of these as the ratios between the distinct entries of $E_j$ and the diagonal entry. 
The number of sign-changes of a sequence $(a_0,\ldots, a_m)$ is the number of indices where $a_ia_{i+1}<0$. We will make use of the following theorem, which we are restating here in our notation.

\begin{theorem}\cite[{\S}13.2 Lemma 2.1]{G93}\label{thm:signchanges} Suppose $X$ is a distance-regular graph of diameter $d$ with distinct eigenvalues $\theta_0> \theta_1 > \cdots > \theta_d$. The cosine sequence with respect to $\theta_j$ has exactly $j$ sign-changes.
\end{theorem}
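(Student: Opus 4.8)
The plan is to recognise the cosine sequence as an eigenvector of a tridiagonal matrix and then invoke the classical oscillation theorem for Jacobi (symmetric tridiagonal) matrices.

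\emph{The cosine recurrence.} Fix a vertex $x$ and a vertex $y$ with $d(x,y)=i$, $1\le i\le d$. Among the $k$ neighbours $z$ of $x$, exactly $c_i$ satisfy $d(z,y)=i-1$, exactly $a_i$ satisfy $d(z,y)=i$, and exactly $b_i$ satisfy $d(z,y)=i+1$. Since $AE_j=\theta_jE_j$ and $E_j(z,y)=E_j(x,x)\,w(d(z,y),j)$, evaluating $(AE_j)(x,y)=\theta_jE_j(x,y)$ and dividing by $E_j(x,x)>0$ yields the cosine recurrence
\[ c_i\,w(i-1,j)+a_i\,w(i,j)+b_i\,w(i+1,j)=\theta_j\,w(i,j),\qquad w(0,j)=1 . \]
Hence $\mathbf w_j=(w(0,j),\dots,w(d,j))^{\top}$ is a nonzero eigenvector, for $\theta_j$, of the $(d+1)\times(d+1)$ tridiagonal matrix $L_1$ with subdiagonal $(c_1,\dots,c_d)$, diagonal $(a_0,\dots,a_d)$ and superdiagonal $(b_0,\dots,b_{d-1})$. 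As $c_i>0$ for $1\le i\le d$ and $b_i>0$ for $0\le i\le d-1$, the matrix $L_1$ is irreducible, and since the $\theta_j$ are distinct the vectors $\mathbf w_0,\dots,\mathbf w_d$ form a basis, so the spectrum of $L_1$ is exactly $\{\theta_0,\dots,\theta_d\}$, each eigenvalue simple.

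\emph{Symmetrisation.} Conjugating $L_1$ by the positive diagonal matrix $S=\mathrm{diag}(s_0,\dots,s_d)$ with $s_{i-1}/s_i=\sqrt{b_{i-1}/c_i}$ produces a symmetric tridiagonal matrix $\widetilde L=S^{-1}L_1S$ with off-diagonal entries $\sqrt{b_{i-1}c_i}>0$. Because $S^{-1}$ is a positive diagonal matrix, $S^{-1}\mathbf w_j$ is a $\theta_j$-eigenvector of $\widetilde L$ with exactly the same sign pattern as $\mathbf w_j$, hence the same number of sign changes. So it suffices to prove the oscillation statement: an irreducible symmetric tridiagonal matrix of order $d+1$ with eigenvalues $\theta_0>\dots>\theta_d$ has, for each $j$, an eigenvector for $\theta_j$ with exactly $j$ sign changes.

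\emph{Oscillation for Jacobi matrices.} Let $f_k(x)=\det(xI-\widetilde L_{[k]})$ be the characteristic polynomial of the leading $k\times k$ principal submatrix, so $f_0=1$, $f_{d+1}(x)=\prod_{j=0}^{d}(x-\theta_j)$, and $f_k(x)=(x-\alpha_{k-1})f_{k-1}(x)-\beta_{k-2}^2f_{k-2}(x)$ with every $\beta_\ell\neq 0$. Solving $\widetilde L\mathbf v=\lambda\mathbf v$ row by row and comparing with this recurrence shows $v_k=f_k(\lambda)\,v_0/(\beta_0\cdots\beta_{k-1})$ for $0\le k\le d$, so $(v_0,\dots,v_d)$ has the same sequence of signs as $(f_0(\lambda),\dots,f_d(\lambda))$. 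No two consecutive $f_k(\lambda)$ vanish (else the recurrence forces $f_0(\lambda)=0$), an isolated zero is flanked by values of opposite sign, so the sign-change count is well defined; a vanishing cosine is harmless for the strict convention since $w(i,j)=0$ forces $w(i-1,j)\,w(i+1,j)<0$. The classical Sturm property of the sequence $f_0,\dots,f_d$ says that for any real $\mu$ the number of sign changes of $(f_0(\mu),\dots,f_d(\mu))$ equals the number of eigenvalues of $\widetilde L_{[d]}$ exceeding $\mu$; this follows from the behaviour as $\mu\to\pm\infty$ together with tracking how the count changes as $\mu$ decreases past the roots of the $f_k$. Finally, strict Cauchy interlacing between $\widetilde L_{[d]}$ and $\widetilde L$ gives $\theta_0>\mu_1>\theta_1>\mu_2>\dots>\mu_d>\theta_d$ for the eigenvalues $\mu_1>\dots>\mu_d$ of $\widetilde L_{[d]}$, so exactly $j$ of them exceed $\theta_j$. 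Combining the last three facts, the $\theta_j$-eigenvector of $\widetilde L$, and hence $\mathbf w_j$, has exactly $j$ sign changes.

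\emph{Expected obstacle.} The only delicate part is the bookkeeping in the last paragraph: establishing the eigenvector formula $v_k\propto f_k(\lambda)$, pinning down the sign conventions in the Sturm property and its sweeping argument, and making the treatment of zero entries consistent with the strict definition of a sign change. Everything else — the cosine recurrence, the symmetrisation, the identification of the spectrum of $L_1$, and Cauchy interlacing — is routine from the association-scheme material recalled above or from \cite{BCN}. Alternatively, one may shortcut that paragraph by citing the standard oscillation theorem for Jacobi matrices, equivalently the strict interlacing of the zeros of the orthogonal polynomials $f_k$.
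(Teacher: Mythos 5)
The paper does not prove this statement; it is quoted from Godsil's \emph{Algebraic Combinatorics} \cite[{\S}13.2, Lemma~2.1]{G93}, and your argument is essentially the standard proof given there: the cosine vector is an eigenvector of the irreducible tridiagonal intersection matrix, which symmetrises to a Jacobi matrix, and the oscillation/Sturm-sequence theorem together with strict interlacing gives exactly $j$ sign changes for the $j$-th eigenvalue. The recurrence derivation, the identification of the spectrum of $L_1$, the eigenvector formula $v_k\propto f_k(\lambda)$, and the interlacing step are all correct.

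One caveat, which is really about the paper's restatement rather than your proof: your parenthetical that a vanishing cosine is ``harmless for the strict convention'' is not quite right. The Sturm count (and Godsil's lemma) uses the convention in which zero entries are discarded before counting sign changes; under the paper's literal definition (the number of indices with $a_ia_{i+1}<0$), an interior zero $w(i,j)=0$ contributes no sign change at either adjacent index even though $w(i-1,j)w(i+1,j)<0$, so the two conventions genuinely disagree there (e.g.\ the octahedron with $\theta_1=0$ has cosine sequence $(1,0,-1)$). Your proof establishes the theorem for the discard-zeros convention, which is the correct reading; for the least eigenvalue $\theta_d$, where the paper actually applies the lemma, the full count of $d$ sign changes forces all cosines to be nonzero and the discrepancy disappears.
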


In particular, this lemma implies that terms of a sequence of cosines for the least eigenvalue alternate in sign and the cosines for the largest eigenvalue all have the same sign. We will illustrate this with an example.

\begin{eg}
For example, the $5$-cycle $C_5$ is a distance-regular graph of diameter $2$ with intersection array $\{2, 1;1,1\}$. The eigenvalues of $C_5$ are \[\theta_0=2^{(1)} > \theta_1=(\varphi -1)^{(2)} > \theta_2= (-\varphi)^{(2)},\] where the multiplicities are given in superscripts and $\varphi = \nicefrac{1}{2}(1 + \sqrt{5})$ is the golden ratio, satisfying
$\varphi^2 - \varphi -1 =0$.  Let $A$ be the adjacency matrix of $C_5$ and $A_2$ be the distance-2 matrix. Let $J$ be the $5\times 5$ all-ones matrix and $I$ be the identity matrix. Then we may compute that the idempotent projectors are
\[
E_0 = \frac{1}{5}J ,\quad E_1=\frac{1}{5} \left(2 I + (\varphi -1)A - \varphi A_2 \right),\quad E_2= \frac{1}{5} \left(2 I  - \varphi A + (\varphi -1)A_2 \right).
\]
The cosines are summarized in \cref{tab:C5}; \cref{thm:signchanges} tells us that there are no sign-changes in the row for $\theta_0$, one sign-change in the row for $\theta_1$ and two sign-changes in the row for $\theta_2$.
\end{eg}

\begin{table}[htbp]
    \centering
\begin{tabular}{c|ccc}
    & $w(0,j)$ & $w(1,j)$ & $w(2,j)$ \\
    \hline
    $j=0$ & $1$ & $1$ & $1$ \\
    $j=1$ & $1$ & $\frac{\varphi-1}{2}$ & $-\frac{\varphi}{2}$ \\
    $j=2$ & $1$ & $-\frac{\varphi}{2}$ & $\frac{\varphi-1}{2}$ \\
\end{tabular}
\caption{The rows give the sequence of cosine with respect to $\theta_j$ for $C_5$. \label{tab:C5}}
\end{table}

For $\theta_j$, we can consider the map $f(r) = w(r,j)$ and use  the following statement about its injectivity, which we have restated from Lemma 3.1 of \cite[{\S}13.3]{G93}. 

\begin{lemma}\cite[{\S}13.3,Lemma 3.1]{G93} \label{lem:cos-injectivity}
    Suppose $X$ is a distance-regular graph of diameter $d$ and valency $k>2$. Let $\theta_j$ be an eigenvalue of $X$. Then $f(r) = w(r,j)$  is not injective if and only if one of the following holds
    \begin{enumerate}[(a)]
        \item $\theta_j =k$, or;
        \item $\theta_j = -k$ (which holds if and only if $X$ is bipartite), or;
        \item there is an even number of (distinct) eigenvalues of $X$ which are greater than $\theta_j$, and $X$ is antipodal. 
    \end{enumerate}
\end{lemma}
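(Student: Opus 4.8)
To prove \cref{lem:cos-injectivity} I would first dispose of the degenerate cases (a) and (b) directly, and then, under the standing assumption $\theta_j\neq\pm k$, prove that $f$ is non-injective exactly when $X$ is antipodal with an even number of eigenvalues exceeding $\theta_j$. The two main tools are the three-term recurrence $\theta_j\, w(r,j)=c_r\,w(r-1,j)+a_r\,w(r,j)+b_r\,w(r+1,j)$ (with $w(0,j)=1$ and $c_0=b_d=0$), which is immediate from $AE_j=\theta_jE_j$, and \cref{thm:signchanges}; I also use the $m_j$-dimensional unit-norm representation $\rho\colon V(X)\to\re^{m_j}$ associated with $E_j$, for which $\langle\rho(x),\rho(y)\rangle=w(d(x,y),j)$, so that $\rho$ identifies two vertices at distance $r$ precisely when $w(r,j)=1$. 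For (a): $\theta_j=k=\theta_0$ forces $w(r,0)\equiv 1$, so $f$ is constant. For (b): $-k$ is an eigenvalue of the connected $k$-regular graph $X$ if and only if $X$ is bipartite, and then every $a_i$ vanishes, so the recurrence gives $w(r,j)=(-1)^r$ and $f$ is two-valued. In both cases $f$ is non-injective, and conversely each of (a), (b) produces the structure claimed.

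To prove that (c) implies non-injectivity, suppose $X$ is antipodal with fibres of size $q\ge 2$, so $A_d$ is the adjacency matrix of a disjoint union of copies of $K_q$ and hence $A_d^2=(q-1)I+(q-2)A_d$. Since $A_d\in\cA$, the matrix $A_dE_j$ is a scalar multiple of $E_j$, and by the quadratic relation that scalar is $q-1$ or $-1$; comparing entries shows this is equivalent to $w(d,j)=1$ or $w(d,j)=-1/(q-1)$ respectively. Using the description of antipodal distance-regular graphs in \cite[\S4.2]{BCN}, the eigenvalues with $w(d,j)=1$ are precisely those pulled back from the antipodal quotient; pulling back a cosine sequence of the quotient shows that for these $\theta_j$ the cosine sequence is palindromic, $w(d-r,j)=w(r,j)$, and a short count shows a palindromic sequence has an even number of sign-changes, so by \cref{thm:signchanges} these are exactly the eigenvalues with an even number of eigenvalues above them. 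Hence under (c) we have $w(d,j)=w(0,j)=1$ with $d\ge 1$, so $f$ is not injective.

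For the converse, assume $f$ is non-injective and $\theta_j\neq\pm k$; fix $r<s$ with $w(r,j)=w(s,j)$. The core step is to show that this coincidence forces $\rho$ to identify two distinct vertices, i.e.\ $w(t,j)=1$ for some $t\in\{1,\dots,d\}$; this is immediate when $r=0$, and for $1\le r<s$ I would extract it from the cosine recurrence together with \cref{thm:signchanges}, ruling out an interior repeated value strictly smaller than $1$ (this is the place where the hypothesis $k>2$ is genuinely needed, since for paths and cycles the cosines have the form $\cos(r\alpha)$ and do admit such repeats). Granting this, $S:=\{0\}\cup\{t:w(t,j)=1\}$ is the distance-set of the equivalence relation $\rho(x)=\rho(y)$, and since $\theta_j\neq k$ it is neither $\{0\}$ nor $\{0,\dots,d\}$, hence a non-trivial system of imprimitivity; so $X$ is imprimitive, and by \cite[Theorem 4.2.1]{BCN} (since $k>2$) it is bipartite or antipodal. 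Now $\theta_j\neq\pm k$ rules the bipartition out of being the system $S$ — in the bipartite case $a_1=0$, and then $w(2,j)=1$ would force $\theta_j=\pm k$ — so, inspecting the systems of imprimitivity of bipartite and of antipodal distance-regular graphs \cite[\S4.2]{BCN}, $S$ must be the antipodal fibre system $\{0,d\}$; thus $X$ is antipodal and $w(d,j)=1$. By the dichotomy of the previous paragraph, $w(d,j)=1$ means $\theta_j$ has an even number of eigenvalues above it, so (c) holds.

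I expect the main obstacle to be the core step of the last paragraph: deducing from a single coincidence $w(r,j)=w(s,j)$ with $1\le r<s$, under $\theta_j\neq\pm k$ and $k>2$, that some cosine equals $1$. Everything else is either a brief computation or an appeal to the classification of imprimitive distance-regular graphs and the structure of antipodal covers in \cite[\S4.2]{BCN}. A secondary point requiring care is the bookkeeping that simultaneously pins down the family with $w(d,j)=1$, the even-indexed eigenvalues, and the injectivity of the complementary family, which again leans on \cref{thm:signchanges} together with the cover structure; one should also separately check the small-diameter cases, where for instance \emph{bipartite but not antipodal} cannot occur.
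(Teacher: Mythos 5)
The paper does not prove this lemma: it is quoted verbatim (in the paper's notation) from \cite[{\S}13.3, Lemma 3.1]{G93}, so there is no in-paper argument to compare yours against. Judged on its own, your outline follows what is essentially the textbook route --- the unit-norm representation attached to $E_j$, the observation that $\{0\}\cup\{t: w(t,j)=1\}$ is the distance-set of an equivalence relation and hence an imprimitivity system, the classification of imprimitive distance-regular graphs of valency $>2$ as bipartite or antipodal, and the identification of the quotient eigenvalues of an antipodal graph as exactly those with $w(d,j)=1$, equivalently an even number of sign changes. Those peripheral steps are all correct as sketched (including the palindromy/sign-change count and the counting argument that upgrades one implication to an equivalence).

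However, there is a genuine gap, and you have named it yourself: the deduction that a single coincidence $w(r,j)=w(s,j)$ with $1\le r<s$ and $\theta_j\neq\pm k$ forces some cosine to equal $1$. This is not a technical loose end --- it \emph{is} the content of the lemma, since everything after it is bookkeeping with \cref{thm:signchanges} and \cite[Theorem 4.2.1]{BCN}. Your proposed mechanism for closing it (``the cosine recurrence together with \cref{thm:signchanges}, ruling out an interior repeated value strictly smaller than $1$'') is not adequate as stated: a three-term recurrence whose solution has a prescribed number of sign changes does not formally exclude an interior repeat with common value below $1$, and your own remark about $w(r,j)=\cos(r\alpha)$ for $k=2$ shows the hypothesis $k>2$ must enter in a structural way that the recurrence alone does not see. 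The standard proof gets this from the representation-theoretic machinery developed earlier in \cite[{\S}13]{G93} (relating a repeated cosine value to vertices identified by the representation via an analysis of $\rho(y)-\rho(z)$ for suitably placed triples of vertices), not from the recurrence. Until that step is supplied, the proposal is an outline of the right shape rather than a proof.
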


Using the recurrences that come from orthogonal polynomials associated with distance-regular graphs, we can derive the following recurrence relation for the cosines:
\begin{align}
    w(0,j) &= 1, \notag \\
    b_0 w(1,j) &= \theta_j, \notag \\
    b_r w(r+1,j) &= (\theta_j - a_r) w(r,j) - c_r w(r-1,j), & \text{for } r = 1, 2, \ldots, d-1, \notag \\
    (\theta_j - a_d) w(d,j) &= c_d w(d-1,j). \label{eq:cosines-recurrence}
\end{align}

We end with a few common pieces of notation from the literature on distance-regular graphs. A distance-regular graph is regular with valency $b_0$; we will often write $k=b_0$. In any graph $X$, the \textsl{$i$-th neighbourhood of $u$}, denoted $\Gamma_i(u)$, is the set of vertices at distance $i$ from $u$ in $X$. If $X$ is distance-regular of diameter $d$, then 
\[
\Gamma_0(u)  \cup \Gamma_1(u) \cup \cdots \cup \Gamma_d(u)
\]
is the \textsl{distance partition} of $X$ with respect to $u$. For any subset $S\subseteq V(X)$, we denote by $X[S]$ the subgraph of $X$ induced by $S$. 

\section{The homomorphism matrix}\label{sec:hom-mat}
Suppose $X$ is a distance-regular graph. Since $X^{\bullet}$ is isomorphic to an induced subgraph of $X$, any homomorphism from $X$ to its core is a homomorphism from $X$ to $X$. In this section, we will look at more general homomorphisms between distance-regular graphs. Note that we do not require our homomorphisms to be surjective and so this will give us a linear algebraic way to analyze homomorphisms between distance-regular graphs and their cores. To this end, we let $X$ and $Y$ will be distance-regular graphs of diameter $d$ with the same intersection array and let $\phi$ be a homomorphism from $X$ to $Y$; we retain these definitions throughout this section.

Let $\theta_0 > \theta_1 > \cdots > \theta_d$ be the distinct eigenvalues of $X$ (and of $Y$), with multiplicities $m_0,\ldots, m_d$ respectively.  We will associate a matrix with $\phi$ with respect to each eigenspace of $X$; these matrices will behave like the idempotent projections onto the eigenspace and they will be equal to the idempotent projections when $\phi$ is an isomorphism. 

 We may write the idempotent matrices of the scheme of $X$ as follows:
\[
E_r = \frac{1}{n} \sum_{i=0}^d Q(i,r) A_i
\]
for $r = 0,\ldots, d$, where $Q$ is the $Q$-matrix of the scheme. Entry-wise, we may rewrite this as:
\[
(E_r)(u,v) = \frac{1}{n} Q(d_X(u,v),r).
\]
For any pair of vertices $u$ and $v$ in $X$,
\[
d_X(u,v) \geq d_Y(\phi(u),\phi(v)),
\]
since the shortest $uv$-path in $X$ is mapped to a $uv$-walk in $Y$ of the same length.

The \textsl{$\theta_r$-homomorphism matrix of $X$ with respect to $\phi$}, denoted $M_r^{\phi}$, is the matrix with entries as follows:
\begin{equation}\label{eq:mdef}
(M_r^{\phi})(u,v) = \frac{1}{n} Q( d_Y(\phi(u), \phi(v)), r).
\end{equation}
We will write $M_r$ for $M_r^{\phi}$ when the context is clear.
We will be comparing $M_r$ with $E_r$; from the definition, we have that $(M_r - E_r)(u,v) = 0$ whenever $d_X(u,v) = d_Y(\phi(u), \phi(v))$. Let $w(0,r),\ldots, w(d,r)$ be the cosine sequence for the $\theta_r$-eigenspace of $X$. Recall from \cref{sec:cos} that $w_(j, r) = \nicefrac{n E_{r}(x,y)}{m_{r}} $ where $n$ is the number of vertices of $X$, $m_{r}$ is the multiplicity of $\theta_r$ and $(x,y)$ are vertices at distance $j$.  In terms of the cosine sequence of $\theta_r$, we can write $M_r$ and $E_r$ as follows:
\begin{equation}\label{eq:mr-cos}
(M_r^{\phi})(u,v) = \frac{m_r}{n} w( d_Y(\phi(u), \phi(v)), r) \text{ and } (E_r)(u,v) = \frac{m_r}{n} w(d_X(u, v), r) .
\end{equation}

First, we will show that $M_r$ is positive semi-definite, like $E_r$.

\begin{lemma}For any $r= 0, \ldots, d$, the matrix $M_r$ is positive semi-definite. \end{lemma}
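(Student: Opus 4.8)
The plan is to realize $M_r^{\phi}$ as a Gram matrix (equivalently, as a compression of the genuinely positive semi-definite idempotent $E_r$ of $Y$), which immediately yields positive semi-definiteness. The key observation is that the entries of $M_r^{\phi}$ are determined by $\phi$ composed with the distance function of $Y$ and the $Q$-matrix of $Y$ (which equals that of $X$, since the two graphs share an intersection array). Concretely, let $E_r^Y$ denote the $\theta_r$-spectral idempotent of $Y$, so that $E_r^Y(\phi(u),\phi(v)) = \tfrac1n Q(d_Y(\phi(u),\phi(v)),r)$. Comparing with \eqref{eq:mdef}, we get the clean identity
\[
(M_r^{\phi})(u,v) = E_r^Y(\phi(u),\phi(v)) \quad\text{for all } u,v\in V(X).
\]
In matrix language, if $T$ is the $|V(Y)|\times|V(X)|$ $0$--$1$ matrix with $T(y,u)=1$ iff $\phi(u)=y$ (the "characteristic matrix" of the map $\phi$), then $M_r^{\phi} = T^{\top} E_r^Y T$.

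From here the proof is essentially one line. Since $E_r^Y$ is an orthogonal projection, it is positive semi-definite, so there is a matrix $B$ with $E_r^Y = B^{\top}B$ (for instance $B = E_r^Y$ itself, as $E_r^Y$ is symmetric idempotent). Then
\[
M_r^{\phi} = T^{\top} E_r^Y T = (BT)^{\top}(BT),
\]
which is manifestly positive semi-definite: for any vector $z$, $z^{\top} M_r^{\phi} z = \|BTz\|^2 \ge 0$. Equivalently, one can argue directly that $M_r^{\phi}$ is a principal-submatrix-with-repetitions of $E_r^Y$: its $(u,v)$ entry only depends on the pair $(\phi(u),\phi(v))$, and any matrix obtained from a PSD matrix by selecting a multiset of indices (repeating rows/columns according to fibre sizes of $\phi$) is again PSD.

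I would carry out the steps in this order: (1) recall that $X$ and $Y$ have the same $Q$-matrix because they have the same intersection array, so $Q$ in \eqref{eq:mdef} is legitimately also the $Q$-matrix of $Y$; (2) rewrite the defining formula \eqref{eq:mdef} as $(M_r^{\phi})(u,v) = E_r^Y(\phi(u),\phi(v))$; (3) introduce the characteristic matrix $T$ of $\phi$ and note $M_r^{\phi} = T^{\top} E_r^Y T$; (4) conclude using $E_r^Y \succeq 0$. I do not anticipate a genuine obstacle here — the only point requiring a moment's care is step (1), making sure the $Q$-matrix appearing in the definition of $M_r^{\phi}$ is correctly interpreted as belonging to $Y$'s scheme so that $E_r^Y$ really does have the stated entries; once that identification is in place, the factorization through $T$ does all the work.
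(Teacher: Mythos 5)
Your proof is correct and rests on exactly the same key identity as the paper's, namely $(M_r^{\phi})(u,v) = E_r^Y(\phi(u),\phi(v))$ where $E_r^Y$ is the $\theta_r$-idempotent of $Y$; the paper then observes that $M_r$ is a principal submatrix of $E_r^Y \otimes J_{n\times n}$, while you phrase the same compression as $M_r^{\phi} = T^{\top}E_r^Y T$ for the characteristic matrix $T$ of $\phi$. These are two presentations of one argument, so your proposal matches the paper's proof.
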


\begin{proof} Let $F_r$ be the idempotent projector onto the $\theta_r$ eigenspace of $Y$. We have that
\[
(M_r)(u,v) = (F_r)(\phi(u), \phi(v))
\]
and thus $M_r$ is a principal submatrix of $F_r \otimes J_{n\times n}$ and is thus positive semi-definite. \end{proof}

We will need to use the following theorem about real matrices. Note that $\sument(M)$ denotes the sum of all of the entries of matrix $M$.

\begin{theorem}\label{thm:psd-tr}If $M, N$ are real $n\times n$ matrices, then $\tr(MN^T) = \sument (M\circ N)$.\end{theorem}

The following lemma shows that $M_r$ behaves like a projection matrix into the $\theta_r$-eigenspace of $A$, in that $z M_r$ is in the $\theta_r$-eigenspace of $A$ for every vector $z \in \re^n$.

\begin{lemma}\label{lem:mr-eigsp} For $r=0,\ldots, d$, we have that $\tr(M_r^{\phi}(A-\theta_r I )) = 0$ and $M_d^{\phi}(A-\theta_d I ) = 0$.\end{lemma}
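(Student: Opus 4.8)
The plan is to prove the two assertions in turn: the identity $\tr(M_r^{\phi}(A-\theta_r I))=0$ by a direct computation of an entrywise sum, and then $M_d^{\phi}(A-\theta_d I)=0$ by combining this identity with positive semidefiniteness.

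For the trace identity I would start from \cref{thm:psd-tr}. Since $A-\theta_r I$ is symmetric it equals its own transpose, so $\tr\big(M_r^{\phi}(A-\theta_r I)\big)=\tr\big(M_r^{\phi}(A-\theta_r I)^{T}\big)=\sument\big(M_r^{\phi}\circ(A-\theta_r I)\big)$. The matrix $A-\theta_r I$ is supported on the diagonal, where its entry is $-\theta_r$, and on the pairs $(u,v)$ with $u\sim v$ in $X$, where its entry is $1$. Using \eqref{eq:mr-cos}: on the diagonal $M_r^{\phi}(u,u)=\tfrac{m_r}{n}w(0,r)=\tfrac{m_r}{n}$, because $d_Y(\phi(u),\phi(u))=0$ and $w(0,r)=1$; and on any edge $uv$ of $X$ the homomorphism property forces $\phi(u)\sim\phi(v)$ in $Y$, hence $d_Y(\phi(u),\phi(v))=1$ and $M_r^{\phi}(u,v)=\tfrac{m_r}{n}w(1,r)$. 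There are $n$ diagonal entries and $nk$ ordered adjacent pairs, where $k=b_0$ is the valency, so $\sument\big(M_r^{\phi}\circ(A-\theta_r I)\big)=n\cdot\tfrac{m_r}{n}\cdot(-\theta_r)+nk\cdot\tfrac{m_r}{n}\,w(1,r)=m_r\big(k\,w(1,r)-\theta_r\big)$, which is $0$ by the cosine recurrence $b_0 w(1,r)=\theta_r$ from \eqref{eq:cosines-recurrence}.

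For the second assertion I would note that $\theta_d$ is the \emph{least} eigenvalue of $X$, so $A-\theta_d I\succeq 0$, and $M_d^{\phi}\succeq 0$ was shown above. I then invoke the standard fact that for positive semidefinite matrices $P,Q$ one has $\tr(PQ)=0\implies PQ=0$: writing $P=P^{1/2}P^{1/2}$, $Q=Q^{1/2}Q^{1/2}$ and using cyclicity of the trace, $0=\tr(PQ)=\tr\big((Q^{1/2}P^{1/2})^{T}(Q^{1/2}P^{1/2})\big)=\|Q^{1/2}P^{1/2}\|_F^2$, so $Q^{1/2}P^{1/2}=0$, whence $QP=0$ and, transposing, $PQ=0$. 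Applying this with $P=M_d^{\phi}$, $Q=A-\theta_d I$ and the already-established $\tr(M_d^{\phi}(A-\theta_d I))=0$ yields $M_d^{\phi}(A-\theta_d I)=0$.

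There is no serious obstacle; the computation is routine once one tracks the two places where the hypotheses genuinely enter. The first is that $M_r^{\phi}$ takes the value $\tfrac{m_r}{n}w(1,r)$ on \emph{every} edge of $X$ — this is precisely the homomorphism property, and it is what makes the edge-sum collapse via $b_0 w(1,r)=\theta_r$. The second is that $A-\theta_d I\succeq 0$ uses specifically that $\theta_d$ is the smallest eigenvalue, which is why only the $r=d$ homomorphism matrix is annihilated by $A-\theta_d I$, whereas for general $r$ only the trace vanishes.
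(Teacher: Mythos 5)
Your proof is correct and follows essentially the same route as the paper: reduce the trace to an entrywise sum via \cref{thm:psd-tr}, note that the homomorphism property pins down $M_r^{\phi}$ on the diagonal and on the edges of $X$, and then use positive semidefiniteness of both $M_d^{\phi}$ and $A-\theta_d I$ for the second assertion. The only cosmetic differences are that the paper deduces the vanishing of the sum by comparing $M_r^{\phi}$ with $E_r$ and invoking $E_r(A-\theta_r I)=0$, whereas you evaluate the sum directly via the recurrence $b_0\,w(1,r)=\theta_r$ (the same identity in disguise), and your $\tr(PQ)=0\Rightarrow PQ=0$ step is spelled out more carefully than the paper's one-line assertion that a trace-zero product of positive semidefinite matrices vanishes.
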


\begin{proof}
Consider the matrix $M_r - E_r$. From the definition of $M_r$, we see immediately that $(M_r - E_r)(u,v) = 0$ whenever $\phi$ preserves the distance between $u$ and $v$. In particular, if $u, v$ are adjacent, then
\[ (M_r)(u,v) = (E_r)(u,v) = \frac{1}{n} Q(1,r).\]
Thus we have that
\[
M_r \circ A = E_r \circ A \text{ and } M_r \circ I = E_r \circ I
\]
and thus
\[
M_r \circ (\alpha A + \beta I)= E_r \circ (\alpha A + \beta I)
\]
for any scalars $\alpha,\beta$.
Since $E_r$ is the idempotent projection onto the $\theta_r$-eigenspace, we have that
$E_r(A- \theta_rI) = 0$.
Theorem \ref{thm:psd-tr} gives us that
\[
\begin{split}
\tr(M_r(A-\theta_r I )) &= \sument (M_r \circ (A-\theta_r I )) \\
&= \sument(E_r \circ (A-\theta_r I )) \\
&= \tr(E_r (A-\theta_r I )) \\
&= 0.
\end{split}
\]
We see that $A- \theta_d I \succeq 0$, since all its eigenvalues are non-negative. Since $M_d$ is also positive semi-definite, we have that $M_d(A-\theta_d I )$ is a positive semi-definite matrix whose trace is $0$ and is thus equal to the zero matrix. \end{proof}

Note that if $\phi$ is an isomorphism from $X$ to $Y$, then $d_X(u,v) = d_Y(\phi(u), \phi(v))$ for all $u,v$ and so $M_r = E_r$. Whenever $\phi$ is not an isomorphism, $M_r^{\phi}-E_r$ may give rise to non-trivial eigenvectors in the $\theta_r$ eigenspace of $X$. The next lemma will help us  try to show that such eigenvectors cannot exist in most cases.

\begin{lemma}\label{lem:mr-er} If $M_d$ is the $\theta_d$-homomorphism matrix of $X$ with respect to $\phi$, then
    \[
     \theta_d(M_d - E_d)(u,v) = \sum_{ w \in \Gamma_1(v)} (M_d - E_d)(u,w).
    \]
     \end{lemma}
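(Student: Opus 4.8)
The plan is to exploit the fact, established in \cref{lem:mr-eigsp}, that $M_d(A-\theta_d I) = 0$, together with the analogous identity $E_d(A-\theta_d I) = 0$. Subtracting these gives $(M_d - E_d)(A - \theta_d I) = 0$, i.e. $(M_d - E_d)A = \theta_d (M_d - E_d)$. Now I would simply read off the $(u,v)$-entry of both sides of this matrix identity. On the right-hand side the $(u,v)$-entry is $\theta_d (M_d - E_d)(u,v)$. On the left-hand side, $((M_d - E_d)A)(u,v) = \sum_{w} (M_d - E_d)(u,w) A(w,v)$, and since $A(w,v) = 1$ exactly when $w \in \Gamma_1(v)$ and is $0$ otherwise, this sum collapses to $\sum_{w \in \Gamma_1(v)} (M_d - E_d)(u,w)$. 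Equating the two entry expressions yields exactly the claimed identity.

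The one subtlety I want to be careful about is the direction of the multiplication: \cref{lem:mr-eigsp} is stated as $M_d(A - \theta_d I) = 0$, and since $A$ is symmetric and $M_d$, $E_d$ are symmetric, one can equally write $(A - \theta_d I)M_d = 0$; either form gives the entrywise relation after transposing, because $(M_d - E_d)$ is symmetric, so $\sum_w (M_d-E_d)(u,w)A(w,v) = \sum_w (M_d-E_d)(w,u)A(v,w)$ and the roles of the summation index are symmetric in $v$. So there is no real obstacle here, just bookkeeping; I would note explicitly that $A$ is symmetric so that left- and right-multiplication coincide after taking transposes.

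Concretely, I would write: since $E_d$ is the projection onto the $\theta_d$-eigenspace, $E_d A = \theta_d E_d$, and by \cref{lem:mr-eigsp}, $M_d A = M_d(A - \theta_d I) + \theta_d M_d = \theta_d M_d$. Hence $(M_d - E_d)A = \theta_d(M_d - E_d)$. Comparing the $(u,v)$-entries of both sides and using that the $(u,v)$-entry of $NA$ is $\sum_{w \sim v} N(u,w)$ for any matrix $N$, we obtain
\[
\theta_d (M_d - E_d)(u,v) = \sum_{w \in \Gamma_1(v)} (M_d - E_d)(u,w),
\]
as required. I expect the entire argument to be three or four lines; the "main obstacle," such as it is, is only to make sure the matrix-times-adjacency-matrix entry formula is invoked on the correct side, which the symmetry of all matrices involved resolves.
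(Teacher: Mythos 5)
Your proof is correct and follows essentially the same route as the paper: both start from $M_d(A-\theta_d I)=0$ (from \cref{lem:mr-eigsp}) and $E_d(A-\theta_d I)=0$, subtract to get $(M_d-E_d)(A-\theta_d I)=0$, and read off the $(u,v)$-entry, with the only cosmetic difference being that you move the $\theta_d I$ term to the other side before extracting entries while the paper keeps it in the sum and isolates the $w=v$ term. The symmetry discussion is harmless but unnecessary, since the statement as written already calls for right-multiplication by $A$.
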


\begin{proof} Lemma \ref{lem:mr-eigsp} gives that $M_d(A - \theta_d I) = 0$. Since $E_d(A - \theta_d I) = 0$, we see that $(M_d - E_d)(A - \theta_d I) = 0$. We have
\[
((M_d - E_d)(A - \theta_d I))(u,v) = \sum_{w\in V(X)}(M_d - E_d)(u,w)(A - \theta_d I)(w,v). \]
Note that $(M_d - E_d)(u,w) = 0$ whenever $d(u,w) \leq 1$. Since $ (A - \theta_d I)(w,v) =0$ whenever $d(w,v) >1$, we obtain,
\[
\begin{split}
((M_d - E_d)(A - \theta_d I))(u,v)
&= \sum_{\scriptstyle w \in  \{v\} \cup\Gamma_1(v)} (M_d - E_d)(u,w)(A - \theta_d I)(w,v) \\
&= - \theta_d(M_d - E_d)(u,v) + \sum_{ w \in \Gamma_1(v)} (M_d - E_d)(u,w)
\end{split}
\]
and the result follows. \end{proof}

\section{Endomorphisms of distance-regular graphs}\label{sec:end-drg}
Now we will consider endomorphisms of distance-regular graphs. Since every endomorphism is a composition of a retraction and an automorphism, we will restrict ourselves to retractions. Let $X$ be a distance-regular graph of diameter $d$ and let $\phi$ be a retraction from $X$ to a subgraph $Y$ of $X$.  Let $e$ be the diameter of $Y$. For $S\subset V(X)$, we will write $\phi(S)$ for the set of images of vertices of $S$. We will suppose that $X$ has at least $2$ vertices. 

Let $u,v$ be vertices of $X$ such that $d_X(u,v) = d_Y(\phi(u), \phi(v)) = e$; such vertices always exist since $Y$ is a retract, and we can take two vertices at maximum distance in $Y$. We will say that such a pair $u,v$ is a \textsl{geodetic pair} of vertices.  Each neighbour $w$ of $v$ is at distance $e-1, e$ or $e+1$ from $u$ and is mapped by $\phi$ to a neighbour of $\phi(v)$ at distance $e$ or $e-1$ from $\phi(u)$. We may partition the neighbours of $v$ in $X$ based on the distance of their images to $u$ in $Y$; that is, we partition  $\Gamma_1(v)$ in $X$  into $C_{a,b}$ for $a \in \{e-1, e, e+1\}$ and $b \in \{e-1, e\}$ as follows:
\[
C_{a,b} = \{ w \in \Gamma_1(v) \ | \ d_X(u,w) = a \text{ and } d_Y(\phi(u), \phi(w)) = b \},
\]
where we let $C_{d+1,b} = \emptyset$ for any $b$, for convenience.  
Note that $C_{e-1,e} = \emptyset$, since the image of $u,w$ cannot be further apart than $u,w$.  These are shown in  \cref{fig:Cab}. We will call the partition \[ \bigcup_{a \in \{e-1,e,e+1\}, b\in \{e-1,e\}} C_{a,b} \] the \textsl{$\phi$-partition of $\Gamma_1(v)$ with respect to $u$}. We will retain these definitions throughout this section, though we will repeat them in the statements of lemmas, since we will apply them in \cref{sec:cores}.

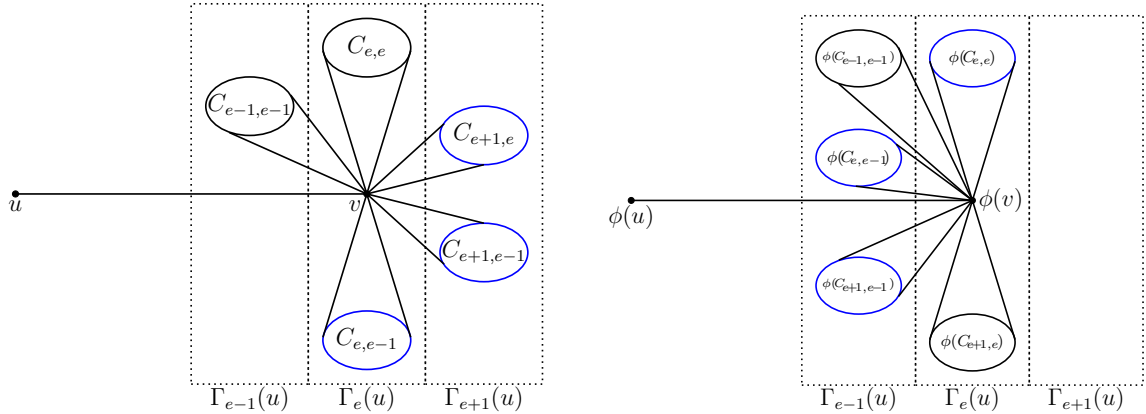
\begin{figure}[htb]
    \centering
    \begin{subfigure}[t]{0.45\textwidth}
        \centering
        \resizebox{\textwidth}{!} {
        \begin{tikzpicture}
            \draw[fill=black] (0,0) circle (1.5pt);
            \draw[fill=black] (6,0) circle (1.5pt);
            
            \node at (0,-0.2) {$u$};
            \node at (5.8,-0.2) {$v$};
            
            \draw[thick] (4,1.5) ellipse (0.75 and 0.5);
            \draw[thick,blue] (6,-2.5) ellipse (0.75 and 0.5);
            \draw[thick] (6,2.5) ellipse (0.75 and 0.5);
            \draw[thick,blue] (8,1) ellipse (0.75 and 0.5);
            \draw[thick,blue] (8,-1) ellipse (0.75 and 0.5);
            
            \node at (4,1.5) {$C_{e-1,e-1}$};
            \node at (6,-2.5) {$C_{e,e-1}$};
            \node at (6,2.5) {$C_{e,e}$};
            \node at (8,1) {$C_{e+1,e}$};
            \node at (8,-1) {$C_{e+1,e-1}$};
            
            \draw[thick,dotted] (3,3.25) rectangle ++(2, -6.5);
            \draw[thick,dotted] (5,3.25) rectangle ++(2, -6.5);
            \draw[thick,dotted] (7,3.25) rectangle ++(2, -6.5);
            
            \node at (4,-3.5) {$\Gamma_{e-1}(u)$};
            \node at (6,-3.5) {$\Gamma_e(u)$};
            \node at (8,-3.5) {$\Gamma_{e+1}(u)$};
            
            \draw[thick] (0,0) -- (6,0);
            
            \draw[thick] (4.7,1.69) -- (6,0);
            \draw[thick] (3.65,1.05) -- (6,0);
            
            \draw[thick] (5.25,2.45) -- (6,0);
            \draw[thick] (6.75,2.45) -- (6,0);
            
            \draw[thick] (5.25,-2.45) -- (6,0);
            \draw[thick] (6.75,-2.45) -- (6,0);
            
            \draw[thick] (7.32,1.2) -- (6,0);
            \draw[thick] (8,0.5) -- (6,0);
            
            \draw[thick] (7.32,-1.2) -- (6,0);
            \draw[thick] (8,-0.5) -- (6,0);

            \end{tikzpicture} }
        \caption{The sets $C_{a,b}$ for $a \in \{e-1,e,e+1\}$ and $b \in \{e-1,e\}$ in the graph $X$.}
        \label{fig:defcij}
    \end{subfigure} \quad
    \begin{subfigure}[t]{0.45\textwidth}
        \centering
        \resizebox{\textwidth}{!} {
        \begin{tikzpicture}
            \draw[fill=black] (0,0) circle (1.5pt);
            \draw[fill=black] (6,0) circle (1.5pt);
            
            \node at (0,-0.3) {$\phi(u)$};
            \node at (6.5,0) {$\phi(v)$};
            
            \draw[thick] (4,2.5) ellipse (0.75 and 0.5);
            \draw[thick,blue] (4,0.75) ellipse (0.75 and 0.5);
            \draw[thick,blue] (4,-1.5) ellipse (0.75 and 0.5);
            \draw[thick] (6,-2.5) ellipse (0.75 and 0.5);
            \draw[thick,blue] (6,2.5) ellipse (0.75 and 0.5);
            
            \node at (4,2.5) {$\scriptscriptstyle \phi(\!C_{\!e\!-\!1,e\!-\!1})$};
            \node at (4,0.75) {$\scriptstyle\phi(\!C_{\!e,e-1\!}\!)$};
            \node at (4,-1.5) {$\scriptscriptstyle \phi(\!C_{\!e\!+\!  1,e\!-\!1})$};
            \node at (6,2.5) {$\scriptstyle\phi(\!C_{\!e,e}\!)$};
            \node at (6,-2.5) {$\scriptstyle\phi(C_{\!e\!+\! 1,e}\!)$};
            
            \draw[thick,dotted] (3,3.25) rectangle ++(2, -6.5);
            \draw[thick,dotted] (5,3.25) rectangle ++(2, -6.5);
            \draw[thick,dotted] (7,3.25) rectangle ++(2, -6.5);
            
            \node at (4,-3.5) {$\Gamma_{e-1}(u)$};
            \node at (6,-3.5) {$\Gamma_e(u)$};
            \node at (8,-3.5) {$\Gamma_{e+1}(u)$};
            
            \draw[thick] (0,0) -- (6,0);
            
            \draw[thick] (4.7,2.69) -- (6,0);
            \draw[thick] (3.65,2.05) -- (6,0);
            
            \draw[thick] (4.66,0.99) -- (6,0);
            \draw[thick] (3.95,0.25) -- (6,0);
            
            \draw[thick] (4.7,-1.69) -- (6,0);
            \draw[thick] (3.65,-1.05) -- (6,0);
            
            \draw[thick] (5.25,-2.45) -- (6,0);
            \draw[thick] (6.75,-2.45) -- (6,0);
            
            \draw[thick] (5.25,2.45) -- (6,0);
            \draw[thick] (6.75,2.45) -- (6,0);

            \end{tikzpicture} }
        \caption{The sets $\phi(C_{a,b})$ for $a \in \{e-1,e,e+1\}$ and $b \in \{e-1,e\}$ in graph $Y$, the image of $X$.}
        \label{fig:image-defcij}
    \end{subfigure}
    \caption{Geodetic vertices $u$ and $v$ are at distance $e$ in a distance-regular graph $X$ and the $\phi$-partition of $\Gamma_1(v)$ with respect to $u$.\label{fig:Cab}}
    \end{figure}

    For the following, we will artificially define $w(d+1,d) =0$, for convenience.
    
    \begin{lemma}\label{lem:help-eq-main-thm}
        Suppose $X$ is a connected distance-regular graph with diameter $d$ on more than 2 vertices. Let $\phi$ be an endomorphism of $X$ such that $\phi(X)$ has diameter $e$. Then the following holds for the $\phi$-partition of $\Gamma_1(v)$ with respect to $u$ of any geodetic pair of vertices $u,v$: 
        \begin{align}\label{eq:zero-sum-cij}
            0 = &|C_{e,e-1}| (w(e-1,d) - w(e,d)) + |C_{e+1,e-1}| (w(e-1,d) - w(e+1,d)) \\
            &+ |C_{e+1,e}| (w(e,d) - w(e+1,d)) .\nonumber
        \end{align}
    \end{lemma}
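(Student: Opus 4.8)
The plan is to apply \cref{lem:mr-er} to the $\theta_d$-homomorphism matrix $M_d := M_d^\phi$ at a geodetic pair $(u,v)$, so that $d_X(u,v) = d_{\phi(X)}(\phi(u),\phi(v)) = e$, and then to expand the sum on the right-hand side of that identity over the cells $C_{a,b}$ of the $\phi$-partition of $\Gamma_1(v)$ with respect to $u$.

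I would first observe that it suffices to treat the case in which $\phi$ is a retraction onto $Y := \phi(X)$: every endomorphism of $X$ factors as a retraction followed by an automorphism, and an automorphism factor changes neither the sizes $|C_{a,b}|$ nor whether $(u,v)$ is geodetic. The point of this reduction is that a retract is an isometric induced subgraph, so $d_Y(\phi(x),\phi(y)) = d_X(\phi(x),\phi(y))$ for all $x,y$, and the results of \cref{sec:hom-mat} (in particular \cref{lem:mr-eigsp}, whose proof uses positive semidefiniteness of $M_d$) apply to $M_d$ as stated. Writing $N := M_d - E_d$, the entrywise formula \eqref{eq:mr-cos} gives $N(x,y) = \tfrac{m_d}{n}\bigl(w(d_Y(\phi(x),\phi(y)),d) - w(d_X(x,y),d)\bigr)$, so $N$ vanishes on any pair whose distance $\phi$ preserves; in particular $N(x,y)=0$ when $d_X(x,y)\le 1$, and $N(u,v)=0$ because $(u,v)$ is geodetic. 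Feeding $N(u,v)=0$ into \cref{lem:mr-er} makes its left-hand side $\theta_d N(u,v)=0$, leaving $\sum_{w\in\Gamma_1(v)} N(u,w)=0$.

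To finish, I would split that sum according to the cells $C_{a,b}$: for $w\in C_{a,b}$ one has $d_X(u,w)=a$ and $d_Y(\phi(u),\phi(w))=b$, hence $N(u,w)=\tfrac{m_d}{n}\bigl(w(b,d)-w(a,d)\bigr)$, which depends only on $(a,b)$. The cells $C_{e-1,e-1}$ and $C_{e,e}$ contribute $0$; the cell $C_{e-1,e}$ is empty; and when $e=d$ the cells $C_{d+1,b}$ are empty, matching the convention $w(d+1,d)=0$, so the corresponding terms vanish regardless. Collecting the three surviving cells $C_{e,e-1}$, $C_{e+1,e-1}$, $C_{e+1,e}$ and dividing by $m_d/n\neq 0$ yields exactly \eqref{eq:zero-sum-cij}. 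I do not expect a genuine obstacle: the substantive work — positive semidefiniteness of $M_d$, the identity $M_d(A-\theta_d I)=0$, and its entrywise form \cref{lem:mr-er} — is already in place, and the remaining step is careful bookkeeping, namely matching the distances appearing in the definition of $M_d$ with those defining the $\phi$-partition, checking that $N$ vanishes on the two ``diagonal'' cells and on the geodetic pair, and handling the boundary case $e=d$ via the emptiness of $C_{d+1,b}$ together with $w(d+1,d):=0$.
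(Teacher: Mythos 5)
Your proposal is correct and follows essentially the same route as the paper: reduce to a retraction, apply \cref{lem:mr-er} to $M_d - E_d$ at a geodetic pair, observe that the difference vanishes on $C_{e-1,e-1}$, $C_{e,e}$ and at $(u,v)$, and expand the remaining sum over the cells $C_{e,e-1}$, $C_{e+1,e-1}$, $C_{e+1,e}$ using the cosine form \eqref{eq:mr-cos}. Your explicit remarks that the retract is isometric (so $d_Y$ agrees with $d_X$ on the image) and that the $e=d$ boundary case is covered by $C_{d+1,b}=\emptyset$ together with the convention $w(d+1,d)=0$ are accurate bookkeeping points that the paper treats more tersely.
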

    
    \begin{proof}
    Every endomorphism of $X$ is the composition of an automorphism with a retraction. Thus, it suffices to show the statement for a retraction $\phi$, since the composition of two automorphisms is again an automorphism.
    
    Since $\phi$ is a homomorphism from $X$ to $X$ and we may consider the $\theta_d$-homomorphism matrix $M_d$ with respect to $\phi$. Let $Y$ be the image of $X$ under $\phi$.
    
    Let $u,v$ be two vertices of $X$ such that $d_X(u,v) = d_Y(\phi(u), \phi(v)) = e$. Then we partition, like before, $\Gamma_1(v)$ in $X$ into $C_{a,b}$ for $a \in \{e-1, e, e+1\}$ and $b \in \{e-1, e\}$. For $w \in C_{e-1,e-1} \cup C_{e,e}$, we have that 
    \[
    (M_d - E_d)(u,w) = 0.
    \]
    Since $(M_d - E_d)(u,v) = 0$, \cref{lem:mr-er} gives us that
    \begin{align*}
        0 &= \sum_{w \in \Gamma_1(v)} (M_d-E_d)(u,w) \\
        0 &= \sum_{w \in C_{e,e-1}} (M_d-E_d)(u,w) + \sum_{w \in C_{e+1,e-1}} (M_d-E_d)(u,w) + \sum_{w \in C_{e+1,e}} (M_d-E_d)(u,w) \\
        0 &= \frac{m_d}{n} ( |C_{e,e-1}| (w(e-1,d) - w(e,d)) + |C_{e+1,e-1}| (w(e-1,d) - w(e+1,d)) \\
        &+ |C_{e+1,e}| (w(e,d) - w(e+1,d)) ) \\
        0 &= |C_{e,e-1}| (w(e-1,d) - w(e,d)) + |C_{e+1,e-1}| (w(e-1,d) - w(e+1,d)) \\
        &+ |C_{e+1,e}| (w(e,d) - w(e+1,d)) .\qedhere 
    \end{align*} \end{proof}

    Now we consider how \eqref{eq:zero-sum-cij} can be satisfied. For instance, we can have the following lemma if the two consecutive difference of cosines in  \eqref{eq:zero-sum-cij} have the same sign. For visualization, the sets whose cardinalities are involved in \eqref{eq:zero-sum-cij} are highlighted blue in \cref{fig:Cab}

   We can regroup \eqref{eq:zero-sum-cij} and gather the coefficients of the cosines to obtain:  
    \begin{align*}
    0 = &  (|C_{e,e-1}|+ |C_{e+1,e-1}|  )w(e-1,d) + (|C_{e+1,e}| - |C_{e,e-1}|)w(e,d)   \\ 
        &- ( |C_{e+1,e-1}| + |C_{e+1,e}|  )  w(e+1,d). 
    \end{align*}
    Since $C_{e+1,e-1} \cup C_{e+1,e} = \Gamma_{e+1}(u) \cap \Gamma_1(v)$, we see that 
    \[
     |C_{e+1,e-1}| + |C_{e+1,e}|  = b_e. 
    \]
    Thus \eqref{eq:zero-sum-ws} becomes 
    \begin{equation}\label{eq:zero-sum-ws}
        0 =  (|C_{e,e-1}|+ |C_{e+1,e-1}|  )w(e-1,d) + (|C_{e+1,e}| - |C_{e,e-1}|)w(e,d)   
            - b_e w(e+1,d). 
    \end{equation}
    We can simplify this using properties of distance-regular graphs in the following lemma. 

\begin{lemma}\label{lem:sum-with-intersection-nums}    Suppose $X$ is a connected distance-regular graph with diameter $d$ on more than 2 vertices. Let $\phi$ be an endomorphism of $X$ such that $\phi(X)$  has diameter $e$ and $u,v$ is a geodetic pair of vertices. The $\phi$-partition of $\Gamma_1(v)$ with respect to $u$ satisfies 
\begin{enumerate}[(a)]
    \item $0 =  (|C_{e,e-1}|+ |C_{e+1,e-1}| + c_e )w(e-1,d) + (|C_{e+1,e}| - |C_{e,e-1}| - (\theta_d - a_e))w(e,d) $; and further,
\item $|C_{e+1,e}| - |C_{e,e-1}|  > \theta_d + a_e$. 
\end{enumerate} 
\end{lemma}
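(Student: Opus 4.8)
The plan is to derive part (a) by eliminating the term $w(e+1,d)$ from \eqref{eq:zero-sum-ws} via the cosine recurrence \eqref{eq:cosines-recurrence}, and then to obtain part (b) by feeding the identity of (a) into the sign pattern of the cosine sequence of the least eigenvalue $\theta_d$, after first rewriting its coefficients purely in terms of intersection numbers.

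For part (a), I would take \eqref{eq:cosines-recurrence} with $j=d$ and $r=e$, namely $b_e w(e+1,d) = (\theta_d - a_e)w(e,d) - c_e w(e-1,d)$, and substitute $-b_e w(e+1,d) = c_e w(e-1,d) - (\theta_d - a_e)w(e,d)$ directly into \eqref{eq:zero-sum-ws}. Collecting the coefficients of $w(e-1,d)$ and of $w(e,d)$ then reproduces the two bracketed expressions of (a) verbatim. The boundary case $e=d$ needs separate handling: there $w(e+1,d)=w(d+1,d)=0$ by the convention fixed before \cref{lem:help-eq-main-thm} and $b_e$ never enters, so in its place I would use the terminal relation $(\theta_d - a_d)w(d,d)=c_d w(d-1,d)$ from \eqref{eq:cosines-recurrence} to recover the same linear identity.

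For part (b), the central simplification is to rewrite the two coefficients of (a) using the distance-regularity identities $|C_{e,e-1}|+|C_{e,e}|=a_e$, $|C_{e+1,e-1}|+|C_{e+1,e}|=b_e$ and $k=a_e+b_e+c_e$. Setting $T:=|C_{e,e}|+|C_{e+1,e}|$, these show that the coefficient of $w(e-1,d)$ equals $k-T$ and the coefficient of $w(e,d)$ equals $T-\theta_d$, so that (a) collapses to $(k-T)\,w(e-1,d)+(T-\theta_d)\,w(e,d)=0$. Since $X$ is connected on more than two vertices, the image $\phi(X)$ contains an edge and $e\ge 1$, whence $c_e\ge 1$ and the coefficient $k-T=|C_{e,e-1}|+|C_{e+1,e-1}|+c_e$ is \emph{strictly} positive. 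By \cref{thm:signchanges} the cosine sequence for $\theta_d$ has the maximal number of sign changes, so $w(e-1,d)$ and $w(e,d)$ are nonzero of opposite sign; feeding this into $(k-T)\,w(e-1,d)=-(T-\theta_d)\,w(e,d)$ with $k-T>0$ forces the second coefficient $T-\theta_d$ to be strictly positive as well, and the translation $|C_{e+1,e}|-|C_{e,e-1}|=T-a_e$ converts this into a lower bound on the quantity in (b).

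I expect the main obstacle to be pinning down the precise additive constant $a_e$. The target inequality $|C_{e+1,e}|-|C_{e,e-1}|>\theta_d+a_e$ is equivalent to $T>\theta_d+2a_e$, whereas the bare positivity $T-\theta_d>0$ obtained above only delivers $T>\theta_d$; closing this gap is where the work lies. To upgrade the bound I would exploit that the opposite-sign relation between $w(e-1,d)$ and $w(e,d)$ rigidly determines the forced value of $T$ through the difference $w(e-1,d)-w(e,d)$, and combine it with the combinatorial envelope $0\le |C_{e,e}|\le a_e$ and $0\le |C_{e+1,e}|\le b_e$ that constrains $T$, using the strictness supplied by $c_e\ge 1$. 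The delicate point is precisely that it is this quantitative strengthening, rather than the qualitative positivity of the $w(e,d)$-coefficient, that is responsible for the factor $a_e$ in the stated bound, and verifying that the strict inequality survives in the extremal configurations will require the most care.
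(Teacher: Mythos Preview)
Your approach to part (a) is exactly the paper's: substitute the cosine recurrence \eqref{eq:cosines-recurrence} into \eqref{eq:zero-sum-ws} to eliminate $w(e+1,d)$ and collect terms. Your careful handling of the boundary case $e=d$ via the terminal relation is a nice addition that the paper glosses over.

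For part (b), your argument is again the paper's, and in fact you have already \emph{completed} it. The positivity of the $w(e,d)$-coefficient in (a), namely
\[
|C_{e+1,e}|-|C_{e,e-1}|-(\theta_d-a_e)>0,
\]
is all the paper establishes; rearranging gives $|C_{e+1,e}|-|C_{e,e-1}|>\theta_d-a_e$. The stated bound $\theta_d+a_e$ is a sign slip in the lemma (and it is repeated verbatim in \cref{thm:smallerdiam}(b)): the paper's own proof concludes with ``the coefficient of $w(e,d)$ must also be strictly positive and part (b) follows,'' which is precisely the $\theta_d-a_e$ bound and nothing stronger. Your reformulation via $T=|C_{e,e}|+|C_{e+1,e}|$ is a clean way to see this: you showed $T-\theta_d>0$, and since $|C_{e+1,e}|-|C_{e,e-1}|=T-a_e$ this is exactly $|C_{e+1,e}|-|C_{e,e-1}|>\theta_d-a_e$.

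So the ``gap'' you identified, and your final paragraph's plan to upgrade the bound by $2a_e$, are artifacts of the typo. There is nothing left to do; your sketch already constitutes a complete proof of the intended statement.
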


\begin{proof} 
    We can combine  with the recurrence for the cosines \eqref{eq:cosines-recurrence} which is 
    \[
    b_e w(e+1,d) = (\theta_d - a_e) w(e,d) - c_e w(e-1, d) 
    \]
    to obtain 
      \begin{align*}
    0 = &  (|C_{e,e-1}|+ |C_{e+1,e-1}|  )w(e-1,d) + (|C_{e+1,e}| - |C_{e,e-1}|)w(e,d)   \\ 
        &- (\theta_d - a_e) w(e,d) + c_e w(e-1, d)  \\ 
    0 = &  (|C_{e,e-1}|+ |C_{e+1,e-1}| + c_e )w(e-1,d) + (|C_{e+1,e}| - |C_{e,e-1}| - (\theta_d - a_e))w(e,d) .  
    \end{align*}  
Since $|C_{e,e-1}|,|C_{e+1,e-1}| \geq 0$ and $c_e \geq 1$, we see that it cannot be true that both terms of this equation are equal to $0$, since $w(e-1,d)\neq 0$. This must be a sum of two terms with opposite sign.  Since the sequence of cosines for $\theta_d$ has $d$ sign-changes by \cref{thm:signchanges}, we see that $w(e-1,d)w(e,d)<0$.  Thus we obtain that the coefficient of $w(e,d)$ must also be strictly positive and part (b) follows.
\end{proof}

Now we will make use of \cref{thm:signchanges}. 

\begin{lemma}\label{lem:diam=e;neq0}
    Suppose $X$ is a connected distance-regular graph with diameter $d$ on more than 2 vertices. Let $\phi$ be an endomorphism of $X$ such that $\phi(X)$ also has diameter $d$. For any  geodetic pair of vertices $u,v$, the $\phi$-partition of $\Gamma_1(v)$ with respect to $u$ satisfies 
    \[ C_{d,d-1} = \{ w \in \Gamma_1(v) \ | \ d_X(u,w) = d \text{ and } d_Y(\phi(u), \phi(w)) = d-1\}
    \]
 is empty. 
\end{lemma}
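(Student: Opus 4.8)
The plan is to specialize the general relation of \cref{lem:help-eq-main-thm} (equivalently, part (a) of \cref{lem:sum-with-intersection-nums}) to the case $e = d$ and simply read off the conclusion. Since $X$ has diameter $d$, no vertex lies at distance $d+1$ from $u$, so the two sets $C_{d+1,d-1}$ and $C_{d+1,d}$ appearing in the $\phi$-partition of $\Gamma_1(v)$ with respect to $u$ are empty (this is also the convention $C_{d+1,b}=\emptyset$ fixed earlier in the paper). Consequently two of the three terms on the right-hand side of \eqref{eq:zero-sum-cij} disappear, and that identity collapses to
\[
0 = |C_{d,d-1}|\bigl(w(d-1,d) - w(d,d)\bigr).
\]

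Next I would argue that the factor $w(d-1,d) - w(d,d)$ is nonzero. By \cref{thm:signchanges}, the cosine sequence $(w(0,d),\dots,w(d,d))$ for the least eigenvalue $\theta_d$ has exactly $d$ sign-changes; since there are only $d$ consecutive pairs $(w(i,d),w(i+1,d))$ with $0 \le i \le d-1$, every one of them must be a sign-change, so in particular $w(d-1,d)\,w(d,d) < 0$. Hence $w(d-1,d)$ and $w(d,d)$ have strictly opposite signs (and in particular are both nonzero), so the displayed equation forces $|C_{d,d-1}| = 0$, that is, $C_{d,d-1} = \emptyset$.

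I expect essentially no obstacle here: the real content is already packaged into \cref{lem:help-eq-main-thm} together with the sign-change count of \cref{thm:signchanges}. The only point that needs a moment's care is the justification that $w(d-1,d)\neq w(d,d)$ — one must observe that achieving the maximal number $d$ of sign-changes over $d$ consecutive pairs forces strictly opposite signs at every step, rather than merely distinctness. The hypothesis that $X$ has more than two vertices is used only to invoke \cref{lem:help-eq-main-thm}, and the existence of a geodetic pair $u,v$ with $d_X(u,v) = d_{\phi(X)}(\phi(u),\phi(v)) = d$ is guaranteed by the standing assumptions of this section (take two vertices at maximum distance in the retract $\phi(X)$, which has diameter $d$).
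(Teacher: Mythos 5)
Your proposal is correct and follows the paper's own proof essentially verbatim: specialize \eqref{eq:zero-sum-cij} to $e=d$ so that only the $|C_{d,d-1}|$ term survives, then use \cref{thm:signchanges} to get $w(d-1,d)\,w(d,d)<0$ and conclude $|C_{d,d-1}|=0$. Your extra remark that $d$ sign-changes over $d$ consecutive pairs forces a sign-change at every step is exactly the (implicit) justification the paper uses.
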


\begin{proof}
We know that $\phi(X)$ also has diameter $d$, thus $|C_{d+1,d}| = |C_{d+1,d-1}| = 0$. Thus Equation \ref{eq:zero-sum-cij} becomes
\[
0 = |C_{d,d-1}| (w(d-1,d) - w(d,d)) .
\]
\cref{thm:signchanges} gives us that  $w(d-1,d) w(d,d) <0$ and thus neither of them are equal to $0$ and they are not equal. Thus  $|C_{d,d-1}| = 0$.
\end{proof}

\begin{lemma}\label{lem:diam>e;Cequal}
    Suppose $X$ is a connected distance-regular graph with diameter $d$ on more than 2 vertices. Let $\phi$ be an endomorphism of $X$ such that $\phi(X)$ has diameter $e < d$. If there exists a geodetic pair of vertices $u,v$ such that $|C_{e+1,e}| = |C_{e,e-1}|$ in $\phi$-partition of $\Gamma_1(v)$ with respect to $u$, then $X$ is either bipartite or antipodal with even diameter.
\end{lemma}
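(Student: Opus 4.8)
The plan is to feed the hypothesis $|C_{e+1,e}| = |C_{e,e-1}|$ into the simplified relation from \cref{lem:sum-with-intersection-nums}(a) and extract a strong constraint on the cosine sequence of $\theta_d$, then combine it with \cref{lem:cos-injectivity} to force $X$ to be bipartite or antipodal with even diameter. First I would substitute $|C_{e+1,e}| - |C_{e,e-1}| = 0$ into part (a), which collapses it to
\[
0 = \bigl(|C_{e,e-1}| + |C_{e+1,e-1}| + c_e\bigr) w(e-1,d) - (\theta_d - a_e) w(e,d).
\]
Since the coefficient of $w(e-1,d)$ is strictly positive (it is $\geq c_e \geq 1$) and $w(e-1,d) \neq 0$, while \cref{thm:signchanges} forces $w(e-1,d) w(e,d) < 0$, both $w(e-1,d)$ and $w(e,d)$ are nonzero and of opposite sign; reading off signs as in the proof of \cref{lem:sum-with-intersection-nums}(b), this shows $\theta_d - a_e < 0$ and gives the exact ratio $w(e,d)/w(e-1,d) = \bigl(|C_{e,e-1}| + |C_{e+1,e-1}| + c_e\bigr)/(\theta_d - a_e)$.

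Next I would exploit \cref{eq:zero-sum-ws} (equivalently \cref{eq:zero-sum-cij}) directly under $|C_{e+1,e}| = |C_{e,e-1}|$: the middle coefficient vanishes, so
\[
0 = \bigl(|C_{e,e-1}| + |C_{e+1,e-1}|\bigr) w(e-1,d) - b_e w(e+1,d),
\]
whence $w(e-1,d)$ and $w(e+1,d)$ have the same sign (both nonzero, since $b_e \geq 1$ for $e < d$ and the left term is then nonzero — here one should check $|C_{e,e-1}| + |C_{e+1,e-1}| > 0$, which holds because if it were zero the displayed equation would force $w(e+1,d) = 0$, contradicting that the $\theta_d$-cosine sequence has $d$ sign changes and hence no zero entries). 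So $w(e-1,d)$ and $w(e+1,d)$ agree in sign. But \cref{thm:signchanges} says consecutive cosines $w(e-1,d), w(e,d), w(e+1,d)$ strictly alternate, which already tells us $w(e-1,d)$ and $w(e+1,d)$ have the same sign — so this alone is not a contradiction. The real leverage is that we now have two independent linear equations relating $w(e-1,d), w(e,d), w(e+1,d)$, together with the cosine recurrence \cref{eq:cosines-recurrence}; eliminating will pin down $\theta_d$ (or a relation among $a_e, c_e, b_e, \theta_d$) rigidly. I expect this to force a degeneracy of the map $f(r) = w(r,d)$: the extra linear relation, combined with the three-term recurrence, should make some $w(r,d)$ coincide — most naturally $w(e-1,d) = w(e+1,d)$ once the ratios are forced — so that $f$ fails to be injective.

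Then I would invoke \cref{lem:cos-injectivity} applied to $\theta_j = \theta_d$: $f(r) = w(r,d)$ is non-injective, so one of (a) $\theta_d = k$, (b) $\theta_d = -k$, or (c) $X$ antipodal with an even number of eigenvalues exceeding $\theta_d$ must hold. Case (a) is impossible since $\theta_d < \theta_0 = k$. Case (b) holds iff $X$ is bipartite, one of the desired conclusions. In case (c), "an even number of eigenvalues greater than $\theta_d$" means $d$ is even (there are exactly $d$ eigenvalues above $\theta_d$ among $\theta_0, \dots, \theta_d$), giving $X$ antipodal with even diameter, the other desired conclusion; I would also need the valency hypothesis $k > 2$ of \cref{lem:cos-injectivity}, which follows from $X$ being distance-regular of diameter $d \geq 2$ on more than two vertices with a proper endomorphism onto a smaller-diameter subgraph (so $k \geq 3$; the $k = 2$ cycles are handled separately or excluded as they have no such endomorphism).

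The main obstacle I anticipate is the middle step: showing cleanly that the two linear relations plus the recurrence genuinely force $f$ to be non-injective (rather than merely constraining $\theta_d$ to a value that happens to be consistent). The cleanest route is probably to show $w(e-1,d) = w(e+1,d)$: from $0 = \bigl(|C_{e,e-1}| + |C_{e+1,e-1}|\bigr) w(e-1,d) - b_e w(e+1,d)$ and the identity $|C_{e+1,e-1}| + |C_{e+1,e}| = b_e$ together with $|C_{e+1,e}| = |C_{e,e-1}|$, we get $|C_{e,e-1}| + |C_{e+1,e-1}| = |C_{e+1,e}| + |C_{e+1,e-1}| = b_e$, so the equation reads $b_e w(e-1,d) = b_e w(e+1,d)$, i.e. $w(e-1,d) = w(e+1,d)$ since $b_e \neq 0$. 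This is the key simplification, and it immediately defeats injectivity of $f$, after which \cref{lem:cos-injectivity} finishes the argument.
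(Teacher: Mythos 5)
Your proposal is correct, and its decisive step --- the final paragraph showing that under $|C_{e+1,e}|=|C_{e,e-1}|$ the relation collapses to $b_e\,w(e-1,d)=b_e\,w(e+1,d)$, hence $w(e-1,d)=w(e+1,d)$, after which \cref{lem:cos-injectivity} finishes --- is exactly the paper's argument (the paper regroups \eqref{eq:zero-sum-cij} directly rather than passing through \eqref{eq:zero-sum-ws}, but the algebra is the same). The earlier detour through \cref{lem:sum-with-intersection-nums}(a) and the speculation about eliminating via the recurrence is unnecessary, and your side remark about the valency hypothesis $k>2$ is a point the paper glosses over but which is easily dispatched as you indicate.
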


\begin{proof}
Since $|C_{e,e-1}| = |C_{e+1,e}|$, then we obtain that Equation \ref{eq:zero-sum-cij} becomes
\begin{align*}
    0 &= |C_{e+1,e}| (w(e-1,d) - w(e,d) + w(e,d) - w(e+1,d)) \\
    &+ |C_{e+1,e-1}| (w(e-1,d) - w(e+1,d)) \\
    &= |C_{e+1,e}| (w(e-1,d) - w(e+1,d)) + |C_{e+1,e-1}| (w(e-1,d) - w(e+1,d)) \\
    &= (|C_{e+1,e}| + |C_{e+1,e-1}|) (w(e-1,d) - w(e+1,d)) .
\end{align*}

We know that $e<d$, thus the neighbours of $v$ at distance $e+1$ have to be mapped somewhere, so
\[
|C_{e+1,e}| + |C_{e+1,e-1}| = b_e > 0 .
\]
Thus we find by  \eqref{eq:zero-sum-cij} that $w(e-1,d) = w(e+1,d)$. \cref{lem:cos-injectivity} gives us  that $w(*,d)$ is not injective if and only if $\theta_d = k, \theta_d = -k$ or there is an even number of eigenvalues greater than $\theta_d$ and the $X$ is antipodal, where $k$ denotes the degree of $X$. Thus $X$ is bipartite or antipodal with even diameter. \end{proof}

\section{Cores of distance-regular graphs}\label{sec:cores}

We will use the theory that we have developed for the homomorphism matrix to study endomorphisms of distance-regular graphs. In particular, our goal is to give necessary conditions for the core of a  distance-regular graph to be a proper subgraph. We recall that the core of a graph is always a retract of the graph.

First we consider the case when the core is a complete graph. 

\begin{theorem}\label{thm:e=1} Suppose $X$  is a distance-regular graph of diameter $d>1$ has a complete  core $X^{\bullet}$. Then 
$\theta_d \leq -2$. Further, if $\theta_d = -2$, then for every edge $u,v$ in $X$, at most one neighour $w\neq u$ of $v$ is mapped to the same vertex as $u$, under any homomorphism to $X^{\bullet}$ and thus for any colouring $f$ of $X$ and $x\in V(X)$ such that $f(x) =c$, we have that 
\[
|\{ y \in f^{-1}(c) \mid d(x,y)=2 \} | \leq \frac{k}{c_2}. 
\]
\end{theorem}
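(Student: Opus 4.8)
The plan is to apply the machinery from \cref{sec:end-drg} with $e=1$, i.e.\ to the case where the core $X^{\bullet}$ is a complete graph $K_m$. First I would observe that since $X$ has diameter $d>1$, the core is a proper retract, so there is a retraction $\phi\colon X\to X^{\bullet}$ with $\phi(X)$ of diameter $e=1$. Fix an edge $u,v$ of $X$; this is automatically a geodetic pair since $d_X(u,v)=1=d_{X^\bullet}(\phi(u),\phi(v))$. Now I would unpack the $\phi$-partition of $\Gamma_1(v)$ with respect to $u$ in the case $e=1$: the sets are $C_{0,0}=\{w\in\Gamma_1(v): d_X(u,w)=0\}=\{u\}$, $C_{1,0}$, $C_{1,1}$, $C_{2,0}$, $C_{2,1}$ (with the convention $C_{e-1,e}=\emptyset$, i.e.\ $C_{0,1}=\emptyset$). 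Here $C_{1,1}$ consists of the common neighbours of $u$ and $v$ whose images stay adjacent; $C_{2,1}$ consists of the neighbours $w\ne u$ of $v$ with $d_X(u,w)=2$ but $\phi(w)=\phi(u)$ (forced, since in $K_m$ distance $1$ is the max, so $d_{X^\bullet}(\phi(u),\phi(w))\in\{0,1\}$, and distance $1$ would contradict $w\in C_{2,\cdot}$ only if... — careful: actually $d_{X^\bullet}(\phi(u),\phi(w))$ can be $0$ or $1$; "$b=e-1=0$" means $\phi(w)=\phi(u)$). So $|C_{2,0}|$ counts exactly the neighbours $w\ne u$ of $v$ at distance $2$ from $u$ that are mapped to $\phi(u)$. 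Wait — I must double-check the indexing: $b\in\{e-1,e\}=\{0,1\}$, so $C_{2,0}$ has $d_{X^\bullet}(\phi(u),\phi(w))=0$, meaning $\phi(w)=\phi(u)$; $C_{2,1}$ has $\phi(w)\ne\phi(u)$ (distance $1$). Thus the quantity "number of neighbours $w\ne u$ of $v$ mapped to the same vertex as $u$" is precisely $|C_{2,0}|$.

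Next I would invoke \cref{lem:sum-with-intersection-nums}(b), which with $e=1$ gives $|C_{e+1,e}|-|C_{e,e-1}| = |C_{2,1}| - |C_{1,0}| > \theta_d + a_1$. I also need the analogous statement relating to $|C_{2,0}|$, which I expect comes from \cref{lem:sum-with-intersection-nums}(a) or a direct re-derivation of \eqref{eq:zero-sum-cij} at $e=1$: plugging $e=1$ into \eqref{eq:zero-sum-cij} gives
\[
0 = |C_{1,0}|\,(w(0,d)-w(1,d)) + |C_{2,0}|\,(w(0,d)-w(2,d)) + |C_{2,1}|\,(w(1,d)-w(2,d)),
\]
and since $w(0,d)=1$ and (by \cref{thm:signchanges}) the cosine sequence for $\theta_d$ alternates, $w(1,d)<0<w(2,d)$ (so $w(0,d)-w(1,d)>0$, etc.). I would then use the cosine recurrence \eqref{eq:cosines-recurrence}: $b_0 w(1,d)=\theta_d$, so $w(1,d)=\theta_d/k$, and $b_1 w(2,d) = (\theta_d-a_1)w(1,d) - c_1 w(0,d) = (\theta_d-a_1)\theta_d/k - 1$. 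Substituting these into the $e=1$ relation and collecting terms should yield an expression for $|C_{2,0}|$ — or rather an inequality — purely in terms of $\theta_d$, $k$, $a_1=a_1$, $b_1$, and the nonnegative integers $|C_{1,0}|,|C_{2,1}|$. The bound $\theta_d\le -2$ I expect to fall out by observing that $w(1,d)=\theta_d/k<0$, and using that $|C_{2,0}|$ (which is a nonnegative integer) is forced to be positive — actually the cleanest route is probably: since the core is complete and $d>1$, there is a pair $u,v$ at distance $2$ with $\phi(u)=\phi(v)$, and chasing this through the partition of $\Gamma_1(v')$ for a suitable edge forces $|C_{2,0}|\ge 1$, then the resulting inequality in $\theta_d$ gives $\theta_d\le -2$. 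For the equality case $\theta_d=-2$, I would show the inequality derived above becomes $|C_{2,0}|\le 1$.

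Finally, for the colouring consequence: a colouring $f$ of $X$ with color classes giving an independent set partition is exactly a homomorphism to a complete graph, which composes with a homomorphism to $X^\bullet$ (the complete core); so the statement "at most one neighbour $w\ne u$ of $v$ with $\phi(w)=\phi(u)$" means each color class meets $\Gamma_1(v)$ in at most one vertex beyond forced structure. To get $|\{y\in f^{-1}(c): d(x,y)=2\}|\le k/c_2$: fix $x$ with $f(x)=c$ and let $Y=\{y: d(x,y)=2,\ f(y)=c\}$. Double-count pairs $(y,v)$ with $y\in Y$, $v\in\Gamma_1(x)\cap\Gamma_1(y)$: there are $|Y|\cdot c_2$ such pairs (each $y$ at distance $2$ from $x$ has exactly $c_2$ common neighbours with $x$). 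On the other hand, for each $v\in\Gamma_1(x)$, the $y\in Y$ adjacent to $v$ are neighbours of $v$ at distance $2$ from $x$ with $f(y)=f(x)$; applying the $\theta_d=-2$ bound $|C_{2,0}|\le 1$ to the edge $x,v$ and the colouring-induced homomorphism shows each such $v$ contributes at most one $y$. Hence $|Y|\,c_2 \le |\Gamma_1(x)| = k$, giving $|Y|\le k/c_2$.

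The main obstacle I anticipate is bookkeeping the $e=1$ specialization of the partition correctly — in particular making sure that "$w$ mapped to the same vertex as $u$" is $|C_{2,0}|$ and not $|C_{2,1}|$, and confirming that the inequality from \cref{lem:sum-with-intersection-nums}(a)/(b) can actually be rearranged to isolate $|C_{2,0}|$ rather than only the combination $|C_{2,1}|-|C_{1,0}|$. It may be necessary to re-derive a variant of \cref{lem:sum-with-intersection-nums} that keeps the $w(0,d)-w(2,d)$ term explicit (rather than eliminating $w(e+1,d)$ via the recurrence), since at $e=1$ the term $|C_{2,0}|$ multiplies $w(0,d)-w(2,d)$ and this is exactly the coefficient that gets absorbed when one applies the recurrence in the direction used for \cref{lem:sum-with-intersection-nums}. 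Getting $\theta_d\le -2$ will require pinning down why $|C_{2,0}|\ge1$ for at least one geodetic edge, which should follow from the standard fact (quoted in the preliminaries) that a non-trivial retraction identifies two vertices at distance $2$.
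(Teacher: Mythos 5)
Your plan is correct and follows essentially the same route as the paper: specialize the $\phi$-partition and \cref{lem:sum-with-intersection-nums}(a) to $e=1$, use the cosine recurrence to reduce to an inequality in $\theta_d$, force $|C_{2,0}|\ge 1$ from a distance-$2$ pair identified by the retraction to get $\theta_d\le -2$, and in the equality case use $|C_{2,0}|\le 1$ together with the $c_2$ common-neighbour count to obtain the $k/c_2$ bound. Your worry about the $|C_{2,0}|$ coefficient being absorbed is unfounded — in \cref{lem:sum-with-intersection-nums}(a) it multiplies $w(e-1,d)=w(0,d)=1$ — and the only detail you leave implicit is that $C_{1,0}=\emptyset$ (a common neighbour of $u$ and $v$ maps to a neighbour of $\phi(u)$), which the paper uses to keep the identity exact.
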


\begin{proof} Let $X^{\bullet}$ be the core of $X$. Let $\phi$ be a homomorphism from $X$ to $X^{\bullet}$.  Since $X$ is not itself a complete graph, we may assume that $\phi$ is a non-trivial retraction of $X$. There must exist two vertices of $X$ at distance $2$, which are mapped to the same vertex. Let $u,w$ be such vertices; $u,w$ are vertices of $X$ such that $\phi(u)=\phi(w)$ and $d_X(u,w)=2$. Let $v$ be a vertex on any shortest path from $u$ to $w$. We see that $u,v$ is a geodetic pair and $w \in C_{2,0}$. 

    We will now use part (a) of \cref{lem:sum-with-intersection-nums} with $e=1$. We see that $C_{1,0} =\emptyset$ since common neighbours of $u,v$ cannot be mapped to $u$. We obtain  
    \[
        0 =  (|C_{2,0}| + c_1 )w(0,d) + (|C_{2,1}|  - (\theta_d - a_1))w(1,d) . 
    \]
    We can use the expressions for the cosines from \eqref{eq:cosines-recurrence} and that $c_1=1$ and get 
    \[
        0 =  |C_{2,0}| + 1  + (|C_{2,1}|  - (\theta_d - a_1))\frac{\theta_d}{b_0}. 
    \]
    Since $b_0=k$ and is positive, this hold if and only if 
    \[ \begin{split}
        0 &=  k|C_{2,0}| + k  + \theta_d |C_{2,1}|  - \theta_d^2 + a_1\theta_d   \\
         &=  (k-\theta_d) |C_{2,0}| + k  + \theta_d (|C_{2,0}| + |C_{2,1}|)  - \theta_d^2 + a_1\theta_d   \\
         &=  (k-\theta_d) |C_{2,0}| + k  + \theta_d b_1 - \theta_d^2 + a_1\theta_d   \\
         &=  (k-\theta_d) |C_{2,0}| + k  + \theta_d (b_1+a_1) - \theta_d^2  \\
    \end{split}
    \]
    where the third equality holds since $ C_{2,0} \cup C_{2,1}$ is the set of neighbours of $v$ at distance $2$ from $u$. Since $w \in C_{2,0}$, we see that $|C_{2,0}| \geq 1$ and we obtain 
    \[ \begin{split}
            0  &=  (k-\theta_d) |C_{2,0}| + k  + \theta_d (b_1+a_1) - \theta_d^2  \\
            &\geq k-\theta_d  + k  + \theta_d (b_1+a_1) - \theta_d^2 \\
            &= 2k-\theta_d   + \theta_d (k - 1) - \theta_d^2 \\
            &= (k - \theta_d)(\theta_d + 2) .
        \end{split}
        \]
    Since $k - \theta_d >0$, we see that $\theta_d \leq -2$. Equality holds if and only if $w$ is the only vertex in $C_{2,1}$.

    We will look more at the equality case, when $\theta_d = -2$.
    We note that the above analysis holds for any choice of $u,v$ such that $u$ is mapped to the same image as a vertex at distance $2$ from $u$.  Now we pick a vertex $u \in X^{\bullet}$. Since $\phi$ is a retraction, we see that $\phi(u)=u$. Suppose $\phi(w)=\phi(w')=u$ for  $w,w'$ both at distance $2$ from $u$. We see that $w,w'$ have no common neighbour in the neighbourhood of $u$, since for any such vertex, the pair $u,v$ would have $w,w' \in C_{2,0}$. Since each of $w,w'$ have $c_2$ neighbours in $\Gamma_1(u)$, we see that there can be at most $k/c_2$ vertices at distance $2$ from $u$ in $\phi^{-1}(u)$. The result follows.
 \end{proof}

 It is well-known that a regular graph with least eigenvalue $\tau>-2$  must be complete or an odd cycle (see, for example, \cite[Cor. 3.12.3]{BCN}). The characterization of distance-regular graphs with least eigenvalue equal to $-2$ is also a classical result; they are either strongly regular (classified by Seidel \cite{Sei1968}) or line graphs (characterized by Mohar and Shawe-Taylor \cite{MohSha1985}). 
The equality characterization implies that if $\omega(X) = \chi(X)$ and $\theta_d = 2$, then, for every colouring of $X$ with $\chi(X)$ colours,  every edge $u,v$ in $X$, at most one neighour $w\neq u$ of $v$ has the same colour as $u$. For example, the line graph of the Tutte $12$-cage is a distance-regular graph of diameter $6$ on 189 vertices with least eigenvalues $-2$. Its chromatic and cliques numbers are both $3$ (verifed using \texttt{SageMath}), and thus its core is $K_3$. For these graphs, \cref{thm:e=1} shows that every colouring of $X$ is a $k/c_2$-improper colouring of the graph whose adjacency matrix is $A + A_2$; for definitions and full context of $d$-improper colourings, see \cite{vdHeuWoo2018}.

Now we turn our attention to endomorphisms of $X$ to subgraphs of the same diameter. We note that any automorphism (including the trivial automorphism) is an example of such an endomorphism. Recall that  $X[S]$ denotes the subgraph of $X$ induced by $S\subseteq V(X)$.

\begin{lemma}\label{lem:helper-lemma-d} Let $X$ be a distance-regular graph of diameter $d$ and let $\phi$ be a retraction endomorphism from $X$ to $Y$, an induced subgraph of $X$.  The following hold:
    \begin{enumerate}[(a)]
        \item $u,w$ is a geodetic pair of  vertices with $d_{Y}(\phi(u),\phi(w)) = d$, for every vertex $w$ in the same component as $v$ of $X[\Gamma_d(u)]$; 
        \item if every component of $X[\Gamma_d(u)]$ contains a vertex mapped to $\Gamma_d(u)$ by $\phi$, then $\phi$ maps $\Gamma_i(u)$ to $\Gamma_i(\phi(u))$ and $u$ is the only vertex mapped to $\phi(u)$. 
    \end{enumerate}
\end{lemma}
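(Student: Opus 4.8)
The plan is to leverage the conclusion of \cref{lem:diam=e;neq0}, namely that $C_{d,d-1}=\emptyset$ for any geodetic pair when $\phi(X)$ has diameter $d$, and propagate the "distance $d$ is preserved" property along the components of $X[\Gamma_d(u)]$. For part (a): fix a vertex $u$ and suppose $v\in\Gamma_d(u)$ satisfies $d_Y(\phi(u),\phi(v))=d$, so $u,v$ is a geodetic pair. I would argue that if $v'$ is any neighbour of $v$ inside $X[\Gamma_d(u)]$ — i.e. $v'\in\Gamma_d(u)$ and $v'\in\Gamma_1(v)$ — then $v'\in C_{d,d-1}\cup C_{d,d}$ with respect to the pair $u,v$, and since \cref{lem:diam=e;neq0} forces $C_{d,d-1}=\emptyset$, we get $v'\in C_{d,d}$, i.e. $d_Y(\phi(u),\phi(v'))=d$, so $u,v'$ is again a geodetic pair. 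Here I should note that applying \cref{lem:diam=e;neq0} requires $u,v$ to be a geodetic pair and $\phi(X)$ to have diameter $d$, which is exactly the standing hypothesis; one also needs $d_X(u,v')=d$ so that $v'$ genuinely lies in the layer $\Gamma_d(u)$ and the partition $C_{a,b}$ is indexed around $e=d$. Then induction along a path in the component of $v$ in $X[\Gamma_d(u)]$ gives the claim for every vertex $w$ in that component. A small subtlety: I must make sure the lemma's hypothesis "$d_Y(\phi(u),\phi(w))=d$" is available as the seed — it is, because we are told $u,w$ is geodetic with $d_Y(\phi(u),\phi(w))=d$ for (at least) the vertex $v$, and the induction carries it to the rest of the component.

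For part (b): assume every component of $X[\Gamma_d(u)]$ contains a vertex mapped into $\Gamma_d(\phi(u))$. By part (a), every vertex of $\Gamma_d(u)$ is then a geodetic partner of $u$, i.e. $\phi$ maps all of $\Gamma_d(u)$ into $\Gamma_d(\phi(u))$. Now I would push this down through the layers by a counting/distance argument. The cleanest route: for a vertex $w\in\Gamma_i(u)$ we always have $d_X(u,w)\ge d_X(\phi(u),\phi(w))$, so $\phi(\Gamma_i(u))\subseteq \bigcup_{j\le i}\Gamma_j(\phi(u))$; hence $\phi$ maps the distance partition around $u$ "downward." Since $\phi$ is a retraction, $\phi(u)=u$ (if $u\in V(Y)$; in general $\phi$ restricted to $Y$ is the identity, and geodetic pairs can be taken in $Y$, so one should set this up with $u\in V(Y)$ — I'll take $u$ a vertex of $Y$, which is harmless because $\Gamma_d$-arguments only need some $u$, or better, phrase (b) for $u\in V(Y)$ as that's how it is used in \cref{thm:maintheorem}). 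Knowing $\phi(\Gamma_d(u))\subseteq\Gamma_d(u)$ together with the downward-mapping property, I would use a walk/shortest-path argument: take $w\in\Gamma_i(u)$ with $i<d$; extend a shortest $u$–$w$ path to a shortest $u$–$w'$ path for some $w'\in\Gamma_d(u)$ (possible since $X$ is distance-regular and $\Gamma_d(u)\ne\emptyset$, in fact $b_i>0$ for $i<d$), so $w$ lies on a geodesic from $u$ to $w'$; the image is a walk of length $d$ from $u$ to $\phi(w')\in\Gamma_d(u)$, hence a geodesic, forcing $\phi(w)\in\Gamma_i(u)$ for exactly the right $i$ and, crucially, forcing each step to be distance-increasing. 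Iterating from $i=d$ down to $i=1$ (or directly: every geodesic from $u$ is mapped to a geodesic from $u$) yields $\phi(\Gamma_i(u))\subseteq\Gamma_i(\phi(u))$ for all $i$. Finally, $\phi^{-1}(\phi(u))\cap\Gamma_i(u)=\emptyset$ for $i\ge1$ since those vertices land in $\Gamma_i(u)$, $i\ge1$; so $u$ is the unique preimage of $\phi(u)$.

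The main obstacle I anticipate is part (b): making the "geodesics map to geodesics from $u$" step fully rigorous requires knowing that every vertex of $\Gamma_i(u)$ lies on some geodesic from $u$ to a vertex of $\Gamma_d(u)$ — equivalently that one can always take $b_i$ forward-steps, which holds because $b_1,\dots,b_{d-1}>0$ in a distance-regular graph of diameter $d$ — and then using the already-established $\phi(\Gamma_d(u))\subseteq\Gamma_d(u)$ to conclude the image path has length exactly $d$ and is therefore a geodesic, so no "shortcut" (distance-decrease) can occur at any step. I would isolate this as the one careful inductive argument; everything else (part (a), and deducing the uniqueness of the preimage of $\phi(u)$) is a short consequence of \cref{lem:diam=e;neq0} and the elementary inequality $d_X(u,w)\ge d_Y(\phi(u),\phi(w))$.
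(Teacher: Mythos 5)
Your proposal is correct and follows essentially the same route as the paper: part (a) is the identical propagation of the $C_{d,d-1}=\emptyset$ conclusion of \cref{lem:diam=e;neq0} along components of $X[\Gamma_d(u)]$, and part (b) is the same "no shorter walk from $u$ to $\Gamma_d(u)$" argument, which the paper states in one sentence and you correctly unpack via $b_i>0$ for $i<d$ and the fact that every vertex lies on a $u$--$\Gamma_d(u)$ geodesic.
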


\begin{proof} We have from \cref{lem:diam=e;neq0} that for all geodetic pairs of vertices $u,v$ of $X$ with $d_{Y}(\phi(u),\phi(v)) = d$, we have that $C_{d,d-1} = \emptyset$ in the $\phi$-partition of $\Gamma_1(v)$ with respect to $u$. Let $u$ be a vertex in $Y$ such that $u$ has at least one vertex at distance $d$ in $Y$. We then see for any $v$ at distance $d$ from $u$ in $Y$ that any neighbour of $v$ in $\Gamma_d(u)$ in $X$ forms a geodetic pair with $u$, under $\phi$. If $w$ is a neighbour of $v$ in $\Gamma_d(u)$ in $X$, this implies that $d_{Y}(\phi(u),\phi(w)) = d$ and thus, repeating the argument with $u,w$, we obtain that all neighbours of $w$ in $\Gamma_d(u)$ in $X$ form  geodetic pairs with $u$, under $\phi$. Thus, if $x \in \Gamma_d(u)$ in $X$ has a walk to $v$ in the subgraph of $X$ induced by $\Gamma_d(u)$, it follows that $x$ is mapped to a vertex in $\Gamma_d(u)$ in $Y$ by $\phi$. Let $Y_d$ be the subgraph of $X$ induced by $\Gamma_d(u)$. We have shown that if any vertex $v \in \Gamma_d(u)$ in $X$ has $\phi(v) \in \Gamma_d(u)$, then every vertex in the component of  $Y_d$ containing $v$ forms a geodetic pair with $u$, under $\phi$. 

    If every component of $Y_d$ contains a vertex mapped to $\Gamma_d(u)$ by $\phi$, then $\phi$ fixes $\Gamma_d(u)$ setwise. Then, $\phi$ must fix $\Gamma_i(u)$ setwise for $i=0,\ldots,d$, or we would find a shorter walk from $u$ to some vertex of $\Gamma_d(u)$ in $Y$. In particular, we have shown that $u$ is the only vertex mapped to $\phi(u)$, since it is the only vertex at distance $0$ from $u$.  
\end{proof}

In a bipartite distance-regular graph, $a_d=0$ and thus $X[\Gamma_d(X)]$ induces a graph with no edges. For the next theorem, we need $X[\Gamma_d(X)]$ to be connected and that $X$ is not antipodal, thus we can focus on primitive distance-regular graphs.

\begin{theorem}\label{thm:maintheorem} If $X$ is a primitive distance-regular graph and $X[\Gamma_d(X)]$ is connected, then any retraction from $X$ to  a subgraph of  diameter $d$ is an automorphism.
\end{theorem}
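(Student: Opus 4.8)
Since the statement concerns retractions, let $\phi\colon X\to Y$ be a retraction onto an induced subgraph $Y$ of diameter $d$. As $\phi$ fixes $V(Y)$ pointwise and its image is $V(Y)$, it is an automorphism precisely when $Y=X$; so it suffices to prove $V(Y)=V(X)$. Throughout I use that $Y$, being a retract, is isometric in $X$, that $\mathrm{Fix}(\phi)=\mathrm{im}(\phi)=V(Y)$, that by hypothesis $X[\Gamma_d(w)]$ is connected for \emph{every} vertex $w$, and that primitivity makes the distance-$d$ graph $X_d$ connected.

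The heart of the argument is the local claim that \emph{if $z\in V(Y)$ and $\Gamma_d(z)\cap V(Y)\ne\emptyset$, then $\phi$ fixes every vertex of $\Gamma_d(z)$} (so in particular $\Gamma_d(z)\subseteq V(Y)$). I would prove this in three steps. First, pick $v'\in\Gamma_d(z)\cap V(Y)$: since $X[\Gamma_d(z)]$ is connected and $\phi(v')=v'\in\Gamma_d(z)$, \cref{lem:helper-lemma-d}(b) applies to $z$ and gives $\phi^{-1}(z)=\{z\}$ and $\phi(\Gamma_i(z))\subseteq\Gamma_i(z)$ for all $i$. Second, $(z,v')$ is a geodetic pair with $d_Y(\phi(z),\phi(v'))=d$ (both vertices are fixed and $Y$ is isometric), so \cref{lem:helper-lemma-d}(a), together with the connectivity of $X[\Gamma_d(z)]$, shows that $(z,w)$ is geodetic with $d_Y(\phi(z),\phi(w))=d$ for every $w\in\Gamma_d(z)$; hence $\phi(w)\in V(Y)$ and $d_X(z,\phi(w))=d$. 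Third, and this is the key move, apply \cref{lem:helper-lemma-d}(b) once more, now to $\phi(w)\in V(Y)$: its hypothesis holds because $X[\Gamma_d(\phi(w))]$ is connected and $z\in\Gamma_d(\phi(w))$ is sent by $\phi$ (to itself) into $\Gamma_d(\phi(w))$, and the conclusion $\phi^{-1}(\phi(w))=\{\phi(w)\}$ forces $w=\phi(w)$. This proves the claim.

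To conclude, choose $u_0,v_0\in V(Y)$ with $d_Y(u_0,v_0)=d$, which is possible since $Y$ has diameter $d$; then $v_0\in\Gamma_d(u_0)\cap V(Y)$, so the claim applies to $u_0$. Let $W$ be the set of vertices $z$ such that $z\in V(Y)$, $\Gamma_d(z)\subseteq V(Y)$, and $\phi$ restricts to the identity on $\Gamma_d(z)$. Then $u_0\in W$, and if $z\in W$ then each $w\in\Gamma_d(z)$ satisfies $w\in V(Y)$ and $z\in\Gamma_d(w)\cap V(Y)$, so the claim applies to $w$ and places $w$ in $W$; hence $z\in W$ implies $\Gamma_d(z)\subseteq W$. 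Since $X_d$ is connected and $u_0\in W$, this forces $W=V(X)$, so $V(Y)=V(X)$ and $\phi$ is an automorphism.

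The step I expect to be the main obstacle is the second application of \cref{lem:helper-lemma-d}(b), to the image vertex $\phi(w)$: one must verify that $\phi(w)$ really lies in $V(Y)$ (so that the lemma, whose proof is phrased for vertices of $Y$, applies and $\phi(\phi(w))=\phi(w)$), that $X[\Gamma_d(\phi(w))]$ is connected (this is precisely why the hypothesis must be assumed for all vertices rather than just one), and that $z$ legitimately witnesses the hypothesis of \cref{lem:helper-lemma-d}(b) for $\phi(w)$. A subsidiary care point is keeping the two senses of ``distance $d$'' apart --- geodesic distance in $X$ versus lying in the $d$-th neighbourhood of a basepoint --- and invoking the isometry of $Y$ exactly where distances measured in $Y$ and in $X$ are identified.
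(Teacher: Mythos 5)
Your proof is correct and follows essentially the same route as the paper: establish via \cref{lem:helper-lemma-d} that vertices at distance $d$ from a suitable base vertex are fixed by $\phi$, then propagate this through the connected graph $X_d$ (using primitivity) to conclude every fibre is trivial. In fact your Step 3 — applying \cref{lem:helper-lemma-d}(b) to the image vertex $\phi(w)$ rather than to $w$ itself — is a slightly more careful rendering of the paper's "reversing the roles of $u,w$", since the lemma's hypotheses are phrased for vertices of $Y$.
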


\begin{proof} We again let $\phi$ be a retraction  from $X$ to $Y$. 
Let $u,v$ be vertices in $X$ such that  $d_{Y}(\phi(u),\phi(v)) = d$. We can define the $\phi$-partition of $\Gamma_1(v)$ with respect to $u$. With respect to any $u,v$ in $X$ such that $d_{Y}(\phi(u),\phi(v)) = d$, we have that $C_{d,d-1} = \emptyset$. If $X[\Gamma_d(u)]$ is connected, then \cref{lem:helper-lemma-d} gives that $u,w$ is a geodetic pair of  vertices with $d_{Y}(\phi(u),\phi(v)) = d$, for every vertex $w$ in $X[\Gamma_d(u)]$ and thus $\phi$ fixes the distance partition of $u$ setwise and $\phi^{-1}(u) = \{u\}$. But $X[\Gamma_d(w)]$ is also connected, so reversing the roles of $u,w$, we obtain that $\phi$ fixes the distance partition of $w$ setwise and $w$ is the only vertex mapped to $\phi(w)$, for every $w \in \Gamma_d(u)$. 

Now we look at $X_d$, the graph on $V(X)$ where vertices are adjacent if they are at distance $d$ in $X$ (this is the graph where $A(X_d) = A_d$). 
Since  $Y$ has diameter $d$, there exists vertices $u,v$ at distance $d$ in $Y$. Then, since this implies that the distance partition of $u$ is fixed by $\phi$, we see that all neighbours of $u$ in $X_d$ are mapped to neighbours of $\phi(u)$ in $X_d$. Since $X$ is not antipodal, we have that $X_d$ is connected, and iteratively applying this argument to neighbours of $u$ (and neighbours of those vertices) gives us that every pair of vertices at distance $d$ in $X$ is mapped to a pair of vertices at distance $d$ in $Y$. 

 We have shown that if $u$ has a neighbour $v$ in $X_d$ such that $d_{Y}(\phi(u),\phi(v)) = d$, then $u,v$ are fixed by $\phi$ (and are in fibres of size $1$). Thus any vertex $w$ in the same component of $X_d$ as $u,v$ is also fixed by $\phi$. Since $X$ is not antipodal, we have that $X_d$ is connected, and so every fibre of $\phi$ has size $1$, and $\phi$ is an automorphism. 
\end{proof}

We have shown that if $X$ is neither bipartite nor antipodal and $X[\Gamma_d(u)]$ is connected (for all vertices $u$), then it is either a core or the core has diameter strictly smaller than $d$. We note that this is a strengthening of Roberson's result in \cite{Rob2019}. Every primitive strongly regular graph has $X[\Gamma_2(u)]$  connected for all $u$. Thus, this implies an imprimitive strongly regular graph is either a core, or has a core of diameter strictly smaller than $2$, which is to say that it is complete. 

We will now devote the rest of the section to giving a feasibility condition for a primitive or antipodal of odd diameter distance-regular graph to admit an endomorphism to a subgraph of strictly smaller diameter, in terms of the cosines of the distance-regular graph. 

\begin{theorem}\label{thm:smallerdiam}
Suppose $X$ is a distance-regular graph of diameter $d$ which is neither bipartite or antipodal on at least $2$ vertices. If $X$ has an endomorphism $\phi$ such that $\phi(X)$ has diameter $e<d$, then there exists non-negative integers $\alpha,\beta,\gamma$ such that all of the following holds:
\begin{enumerate}[(a)]
    \item $\alpha \leq a_e$ and $\beta + \gamma = b_e$; 
    \item $\gamma - \alpha > \theta_d + a_e$;
    \item $\alpha \neq \gamma$; and,
    \item $\displaystyle     0 = \alpha (w(e-1,d) - w(e,d)) + \beta (w(e-1,d) - w(e+1,d)) + \gamma (w(e,d) - w(e+1,d))$.
\end{enumerate}
\end{theorem}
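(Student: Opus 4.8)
The plan is to set $\alpha = |C_{e,e-1}|$, $\beta = |C_{e+1,e-1}|$, and $\gamma = |C_{e+1,e}|$ for a well-chosen geodetic pair $u,v$ and its $\phi$-partition of $\Gamma_1(v)$, and then check that the four listed conditions are exactly what the lemmas of \cref{sec:end-drg} already give us. Since $X$ is neither bipartite nor antipodal, it is primitive, so a geodetic pair $u,v$ realizing $d_X(u,v) = d_{\phi(X)}(\phi(u),\phi(v)) = e$ exists (take two vertices at maximum distance in $\phi(X)$, which is a retract up to the automorphism factor). With these assignments, part (d) is literally \cref{lem:help-eq-main-thm}, namely \eqref{eq:zero-sum-cij}.

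For part (a): $\beta + \gamma = |C_{e+1,e-1}| + |C_{e+1,e}| = |\Gamma_{e+1}(u)\cap\Gamma_1(v)| = b_e$, which was already noted in the text preceding \cref{lem:sum-with-intersection-nums}. For the bound $\alpha \le a_e$, observe that $C_{e,e-1} \subseteq \Gamma_e(u)\cap\Gamma_1(v)$, and $|\Gamma_e(u)\cap\Gamma_1(v)| = a_e$ since $u,v$ are at distance $e$; hence $\alpha = |C_{e,e-1}| \le a_e$. Part (b), $\gamma - \alpha > \theta_d + a_e$, is exactly part (b) of \cref{lem:sum-with-intersection-nums} (whose hypotheses — $X$ connected distance-regular of diameter $d$ on more than $2$ vertices, $\phi$ an endomorphism with image of diameter $e$, $u,v$ geodetic — are all met here, noting $e < d$ forces $d \geq 2$ and hence more than $2$ vertices). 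Finally, part (c), $\alpha \neq \gamma$, is precisely the contrapositive of \cref{lem:diam>e;Cequal}: if $\alpha = \gamma$ for this (or any) geodetic pair, that lemma would force $X$ to be bipartite or antipodal with even diameter, contradicting our hypothesis.

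So the proof is essentially an assembly: invoke existence of a geodetic pair, define $\alpha,\beta,\gamma$ from the $\phi$-partition, and cite \cref{lem:help-eq-main-thm}, \cref{lem:sum-with-intersection-nums}(b), and \cref{lem:diam>e;Cequal} in turn, together with the elementary counts $|\Gamma_e(u)\cap\Gamma_1(v)| = a_e$ and $|\Gamma_{e+1}(u)\cap\Gamma_1(v)| = b_e$. The only genuine subtlety — and the step I expect to need the most care — is the reduction from a general endomorphism $\phi$ to a retraction: an endomorphism factors as an automorphism composed with a retraction, and one must check that the diameter of the image and the geodetic-pair structure are preserved under precomposition by an automorphism, so that it suffices to run the argument for the retraction part. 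This is exactly the reduction used at the start of the proof of \cref{lem:help-eq-main-thm}, so it can be handled by the same remark. Everything else is bookkeeping, and no new computation beyond what \eqref{eq:cosines-recurrence} already supplied inside \cref{lem:sum-with-intersection-nums} is required.
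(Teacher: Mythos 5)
Your proposal is correct and follows essentially the same route as the paper: choose a geodetic pair, set $\alpha,\beta,\gamma$ to be $|C_{e,e-1}|, |C_{e+1,e-1}|, |C_{e+1,e}|$, and derive (a)--(d) from the elementary counts together with \cref{lem:help-eq-main-thm}, \cref{lem:sum-with-intersection-nums}(b), and the contrapositive of \cref{lem:diam>e;Cequal}. The reduction to retractions that you flag is indeed the same one already absorbed into the proof of \cref{lem:help-eq-main-thm}, so nothing further is needed.
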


\begin{proof}
    If $X$ has an endomorphism $\phi$ such that $\phi(X)$ has diameter $e<d$, then there exist a geodetic pair of vertices and we can let $u,v$ be vertices of $X$ such that $d_X(u,v) = d_Y(\phi(u), \phi(v)) = e$ and consider the $\phi$-partition of $\Gamma_1(v)$ with respect to $u$. Then we will see that 
    \[ \alpha = |C_{e,e-1}|,\quad \beta = |C_{e+1,e-1}|, \gamma = |C_{e+1,e}|
    \]
    satisfies (a) -- (d). Part (a) follows since 
    \[ C_{e,e-1} \subset \Gamma_e(u)\cap \Gamma_1(v)  \quad \text{ and }\quad C_{e+1,e-1} \cup C_{e+1,e} \Gamma_{e+1}(u)\cap \Gamma_1(v) 
    \]
     for vertices $u,v$ at distance $e$ in $X$. Part (b) follows from \cref{lem:sum-with-intersection-nums}. If $\alpha = \gamma$, then \cref{lem:diam>e;Cequal} gives us that $X$ is either bipartite or antipodal with even diameter, a contradiction, which gives us (c). Part (d) is exactly \eqref{eq:zero-sum-cij} in \cref{lem:help-eq-main-thm}. 
\end{proof}

The contrapositive of \cref{thm:smallerdiam} gives us that if there does not exist non-negatives integers $\alpha,\beta,\gamma$ which satisfy (a) -- (d), there does not exist an endomorphism to a subgraph of diameter $e$.

\section{Diameter three distance-regular graphs}\label{sec:diam3}
In this section, we turn our attention to distance-regular graphs of diameter 3, the smallest open case of Open Problem \ref{op:drg}. Since all bipartite graphs have $K_2$ as their core, we look only at antipodal (but not bipartite) and primitive graphs. To demonstrate the power of \cref{thm:maintheorem} and \cref{thm:smallerdiam}, we use them to find feasible intersection arrays of primitive distance-regular graphs of diameter 3 which are core-complete and  those which could have an endomorphism to a subgraph of diameter 2. We also did  computations for antipodal (but not bipartite) distance-regular graphs of diameter 3, but we did not find any with degree at most $25$ which could admit an endomorphism to a subgraph of diameter $2$.

For the tables of this paper, we generated intersection arrays of distance regular graphs of diameter $3$ which were feasible in that they satisfying the following:
\begin{itemize}
    \item basic integrality and parity checks on $v$, $k_i$'s and $a_i$'s;
    \item the intersection numbers $p_{ij}^k$ are non-negative integers;
    \item all conditions given in \cite[{\S}4.1.D]{BCN} are satisfied;
    \item the multiplicities of the eigenvalues are positive integers;
    \item the Krein numbers $q_{ij}^k$ are non-negative and the Absolute Bound holds (this is  \cite[Proposition 4.1.5]{BCN}); and 
    \item the following theorems about feasibility of intersection array from \cite{BCN}: 
    Cor. 5.1.3,
    Thm. 5.2.5,
    Lem. 5.3.1,
    Thm. 5.4.1,
    Cor. 5.4.2,
    Prop. 5.4.3,
    Prop. 5.5.1*,
    Prop. 5.5.4*
    Lem. 5.5.5,
    Prop. 5.5.6,
    Prop. 5.5.7,
    Prop. 5.6.1,
    Cor. 5.6.2,
    Prop. 5.6.3*,
    Lem. 5.6.4,
    Lem. 5.6.5, 
    Cor. 5.8.2, and 
    Thm. 6.5.1. 
\end{itemize} 
We note that this does not guarantee that the graphs exist and we have not applied all feasbility checks. For the results denoted with ``*'' in the last bullet point from \cite{BCN}, we have used the updated versions from the Additions and Corrections \cite{BCNadd}. We note, in the primitive case, some intersection arrays in \cref{tab:prim3,tab:corecomplete} do not appear in the table of diameter 3 primitive graphs starting page 425 of \cite{BCN}, since we have not applied all known feasibility conditions and the table of \cite{BCN} is restricted to graphs with at most 1024 vertices. The intersection arrays which are not found in the table of \cite{BCN} are denoted with a $-$ before the first column. Our computations were done with \texttt{SageMath}\cite{sage}. 

In \cref{tab:prim3}, we have the intersection arrays of primitive distance-regular graphs $X$ with degree at most $25$ for which there exists at least one triple of non-negative integers $\alpha,\beta,\gamma$ which satisfy the conclusion of \cref{thm:smallerdiam} with $e=2$. Recall that any homomorphism $\phi$ to a subgraph of diameter $2$ will have a geodetic pair of vertices $u,v$ at distance $2$ in both $X$ and $\phi(X)$ and the $\phi$-partition of $\Gamma_1(v)$ with respect to $u$ will give rise to such integers $\alpha,\beta,\gamma$. Thus, these graphs have no homomorphism to a subgraph of diameter $2$. \cref{tab:prim3} covers graphs with degree $k =b_0 \leq 25$; these intersection arrays can also be found in \cite{BCN}.  Since this table is long, to better preserve the readability of the paper, \cref{tab:prim3} can be found in 
\cref{sec:appendixtables}.

        We note that if $\Gamma_3(v)$ for a primitive distance-regular graph of diameter $3$ is connected for all $v$ and it has no numbers $\alpha,\beta,\gamma$ satisfying the conditions in \cref{thm:smallerdiam} for $e=2$, then \cref{thm:maintheorem} gives us that theses graphs must be core-complete. We note that $\Gamma_3(v)$ is an induced subgraph of $X$ which is regular of valency $a_3$. If $a_3 > \theta_1$, then by interlacing, $\Gamma_3(v)$ can only have one eigenvalue equal to $a_3$ and is thus connected. Thus, if $X$ is a primitive distance-regular graph of diameter $3$  has no numbers $\alpha,\beta,\gamma$ satisfying the conditions in \cref{thm:smallerdiam} for $e=2$ and $a_3 > \theta_1$, then $X$ is core-complete. These intersection arrays are given in \cref{tab:corecomplete}. 

    {\footnotesize 
\begin{longtable}{cllr} 
    \caption{Feasible parameters of primitive distance-regular graphs of diameter 3, must be core-complete, with $b_0=k\leq 25$. The first column is the number of vertices, written as the sum of orders of the distance partition of a vertex. The second column gives the eigenvalues of the eigenvalues, with multiplicities shown in superscripts. The third column has the intersection parameters $\{b_0,b_1,b_2; c_1,c_2,c_3\}$; the table is sorted lexicographically by this column. Intersection arrays that are not in \cite{BCN} are denoted with ``$-$'' before the first column. \label{tab:corecomplete}} \\
    \hline
   & \textbf{Number of vertices} & \textbf{Eigenvalues} & \textbf{Intersection \#s} \\
    \hline
    \endfirsthead
    
    \multicolumn{4}{c}%
    {{\bfseries Table (continued) }}\\[6pt]
    \hline
   & \textbf{Number of vertices} & \textbf{Eigenvalues} & \textbf{Intersection \#s}  \\
    \hline
    \endhead
    
 & $v = 57 = 1 + 6 + 30 + 20$ & $6^{1}\, 2.618^{18}\, 0.382^{18}\, -3^{20}$ & $\{6,5,2;1,1,3\}$ \\
 & $v = 64 = 1 + 7 + 21 + 35$ & $7^{1}\, 3^{21}\, -1^{35}\, -5^{7}$ & $\{7,6,5;1,2,3\}$ \\
 & $v = 176 = 1 + 7 + 42 + 126$ & $7^{1}\, 3^{66}\, -1^{77}\, -4^{32}$ & $\{7,6,6;1,1,2\}$ \\
 & $v = 135 = 1 + 8 + 56 + 70$ & $8^{1}\, 3^{54}\, -1^{50}\, -4^{30}$ & $\{8,7,5;1,1,4\}$ \\
 & $v = 231 = 1 + 10 + 80 + 140$ & $10^{1}\, 4^{77}\, -1^{98}\, -4^{55}$ & $\{10,8,7;1,1,4\}$ \\
 & $v = 210 = 1 + 11 + 110 + 88$ & $11^{1}\, 4^{55}\, 1^{77}\, -4^{77}$ & $\{11,10,4;1,1,5\}$ \\
 & $v = 175 = 1 + 12 + 72 + 90$ & $12^{1}\, 7^{28}\, 2^{21}\, -2^{125}$ & $\{12,6,5;1,1,4\}$ \\
 & $v = 125 = 1 + 12 + 48 + 64$ & $12^{1}\, 7^{12}\, 2^{48}\, -3^{64}$ & $\{12,8,4;1,2,3\}$ \\
 & $v = 144 = 1 + 13 + 65 + 65$ & $13^{1}\, 5^{39}\, -1^{78}\, -5^{26}$ & $\{13,10,7;1,2,7\}$ \\
 & $v = 216 = 1 + 15 + 75 + 125$ & $15^{1}\, 9^{15}\, 3^{75}\, -3^{125}$ & $\{15,10,5;1,2,3\}$ \\
$-$ & $v = 2057 = 1 + 16 + 240 + 1800$ & $16^{1}\, 5^{680}\, -1^{968}\, -6^{408}$ & $\{16,15,15;1,1,2\}$ \\
 & $v = 324 = 1 + 17 + 136 + 170$ & $17^{1}\, 5^{102}\, -1^{170}\, -7^{51}$ & $\{17,16,10;1,2,8\}$ \\
 & $v = 343 = 1 + 18 + 108 + 216$ & $18^{1}\, 11^{18}\, 4^{108}\, -3^{216}$ & $\{18,12,6;1,2,3\}$ \\
 & $v = 532 = 1 + 18 + 270 + 243$ & $18^{1}\, 5.623^{171}\, -1^{189}\, -4.623^{171}$ & $\{18,15,9;1,1,10\}$ \\
$-$ & $v = 1911 = 1 + 20 + 360 + 1530$ & $20^{1}\, 6^{585}\, -1^{884}\, -6^{441}$ & $\{20,18,17;1,1,4\}$ \\
 & $v = 120 = 1 + 21 + 63 + 35$ & $21^{1}\, 11^{9}\, 3^{35}\, -3^{75}$ & $\{21,12,5;1,4,9\}$ \\
 & $v = 512 = 1 + 21 + 147 + 343$ & $21^{1}\, 13^{21}\, 5^{147}\, -3^{343}$ & $\{21,14,7;1,2,3\}$ \\
 & $v = 330 = 1 + 21 + 168 + 140$ & $21^{1}\, 7.325^{77}\, -1^{175}\, -5.325^{77}$ & $\{21,16,10;1,2,12\}$ \\
$-$ & $v = 650 = 1 + 22 + 396 + 231$ & $22^{1}\, 7^{156}\, 2^{143}\, -4^{350}$ & $\{22,18,7;1,1,12\}$ \\
 & $v = 320 = 1 + 22 + 231 + 66$ & $22^{1}\, 6^{55}\, 2^{154}\, -6^{110}$ & $\{22,21,4;1,2,14\}$ \\
 & $v = 1024 = 1 + 22 + 231 + 770$ & $22^{1}\, 6^{330}\, -2^{616}\, -10^{77}$ & $\{22,21,20;1,2,6\}$ \\
$-$ & $v = 2048 = 1 + 23 + 253 + 1771$ & $23^{1}\, 7^{506}\, -1^{1288}\, -9^{253}$ & $\{23,22,21;1,2,3\}$ \\
 & $v = 165 = 1 + 24 + 84 + 56$ & $24^{1}\, 13^{10}\, 4^{44}\, -3^{110}$ & $\{24,14,6;1,4,9\}$ \\
 & $v = 729 = 1 + 24 + 192 + 512$ & $24^{1}\, 15^{24}\, 6^{192}\, -3^{512}$ & $\{24,16,8;1,2,3\}$ \\
$-$ & $v = 625 = 1 + 24 + 216 + 384$ & $24^{1}\, 9^{120}\, -1^{384}\, -6^{120}$ & $\{24,18,16;1,2,9\}$ \\
$-$ & $v = 8526 = 1 + 25 + 500 + 8000$ & $25^{1}\, 11^{725}\, 4^{2900}\, -4^{4900}$ & $\{25,20,16;1,1,1\}$ \\
    \end{longtable}
    }

We note that there are parameters which appeared in neither \cref{tab:prim3} nor \cref{tab:corecomplete}. For example, the odd graph $O_7$ on $35$ vertices is a primitive distance-regular graph with intersection array $\{4, 3, 3; 1, 1, 2\}$. The subgraph induced by $\Gamma_3(v)$ of any $v$ (since this graph is vertex-transitive) is the disjoint union of three copies of $C_6$. Thus, we cannot apply \cref{thm:maintheorem}. Our computations found that there  were no feasible triples of numbers satisfying the conditions in \cref{thm:smallerdiam}, so its core does not have diameter $2$. We see that $O_7$ is arc-transitive and has $\chi(O_7) = 3$ and $\omega(O_7) = 2$ and thus the coreß must be an arc-transitive graph $Y$ on $5,7$ or $35$ vertices of degree $2$ or $4$ with $\chi(Y) = 3$ and $\omega(Y) = 2$. On $5$ vertices, the only such graph is $C_5$ which has diameter $2$ and thus cannot be the core. On $7$ vertices, the only such graph is $C_7$, which is indeed an induced subgraph of $O_7$. However, we searched all possible homomorphisms using \texttt{SageMath} and see that $O_7$ has no homomorphism to $C_7$, and is thus a core. The argument is ad hoc and reflects the lack of more systematic methods or structural theorems that would apply in this setting. 
 
For antipodal (but not bipartite) distance-regular graphs of diameter 3, we searched up to $b_0\leq 50$ but none of them had any feasible triple $\alpha,\beta,\gamma$ satisfying the conditions in \cref{thm:smallerdiam}. 
These intersection arrays are given in \cref{tab:antipodal3} in 
\cref{sec:appendixtables}. Some of the intersection arrays are known to contain at least one graph; for example, the following all appear in \cref{tab:antipodal3}:
\begin{itemize}
    \item the symplectic $7$-cover of $K_9$ in $63$ vertices with intersection array $\{8,6,1;1,1,8\}$;
    \item $\text{GQ}(2,4)$  minus a spread on $27$ vertices with intersection array $\{8,6,1;1,3,8\}$;
    \item the Coolsaet-Degraer $3$-cover and the symplectic $3$-cover of $K_{14}$, both on $42$ vertices with intersection array $\{13,8,1;1,4,13\}$; and 
\item the symplectic $5$-cover of $K_{12}$ on $60$ vertices with intersection array $\{11,8,1;1,2,11\}$. 
\end{itemize}

We see that the fourth row of  \cref{tab:antipodal3}  contains the intersection array of the Klein graph on $24$ vertices. We can show that this graph is a core. Since this graph is arc-transitive, the degree of the core must divide $7$ and thus, if it had a core on a smaller number of vertices, it would be a $7$-regular graph on $12 $ vertices with chromatic number $4$ and 
clique number $3$. By generating all $7$-regular vertex-transitive graphs on $12$ vertices with \texttt{nauty\_geng}, we determined using \texttt{SageMath} that none of them have chromatic number $4$ and clique number $3$. In comparison, using \cref{thm:smallerdiam} and a much easier computation, we were already able to rule out a core of diameter $2$.  A natural next step would be to find further conditions to eliminate endomorphism to subgraphs of distance-regular graphs with the same diameter.

\section{Further directions}\label{sec:conclusion}

In this paper, we introduce a homomorphism matrix for distance-regular graphs and used it to derive structural constraints on endomorphisms to subgraphs of smaller diameter. Our results allow us to show, via a simple computation, that many distance-regular graphs must be core-complete. We also showed how the case of a complete core gives rise to a bound on the smallest eigenvalue, and how equality in that bound enforces a strict local coloring constraint at distance two. 

There are many unanswered questions about homomorphisms of distance-regular graphs, including Open Problem \ref{op:drg} and Conjecture \ref{conj:antipodaldiam3}. \cref{tab:prim3} contains potential counterexamples to the open problem. One can ask whether any of the triples $\alpha,\beta,\gamma$ in \cref{tab:prim3} can be realized as  $|C_{e,e-1}|,|C_{e+1,e-1}|, |C_{e+1,e}|$ for some endomorphism from the graph to a diameter $2$ core. 

We note that it is possible for a core-complete distance-regular graph of diameter $d$ to have an endomorphism to a subgraph of diameter $1<e<d$. The intersection array of Hamming graph $H(3,3)$ on 27 vertices is in the fifth row of \cref{tab:prim3}. One can show that $H(3,3)$ has chromatic number and clique number both equal to $3$  and thus $K_3$ is its core. It has the bowtie graph (two triangles identified at a vertex) as an induced subgraph and it has a homomorphism to the bowtie, which has diameter $2$. This example is due to Roberson \cite{Rob2025} and is also an example of a distance-regular graph which is not a pseudo-core.

Specifically, \cref{tab:prim3} contains the intersection array of point graphs of the generalized hexagons of orders $(s,1)$, for $s=3,4,5,7,8,9, 11, 12$, each with two solutions, $(0,1,s-1)$ and $(1,0,s)$. Let $X$ be the point graph of the generalized hexagon of order $(s,1)$ with $s>2$; the intersection array of $X$ is $\{2s,s,s; 1,1,2\}$ and $a_1= a_2= s-1, a_3 = 2s-s$.  We leave it as an open problem to the reader to show that if $X$ has a retraction $\phi$ to a diameter $2$ subgraph $Y$, then any such retraction must have the property that for any pair of vertices $u,v$ in $Y$ at distance 2 in $Y$, the $\phi$-partition of $\Gamma_1(v)$ with respect to $u$ satisfies the following:
\[ (|C_{e,e-1}|,|C_{e+1,e-1}|, |C_{e+1,e}|) \in \{ (0,1,s-1),(1,0,s)\}.
\]
A follow-up open problem would be to either give a retraction $\phi$ satisfying this condition, or show that it cannot exist.

We note that we have restricted ourselves to looking at the least eigenvalue $\theta_d$ in \cref{sec:cores}, though the analogous statement to \cref{lem:help-eq-main-thm} for any $\theta_i$ can be derived from \cref{lem:mr-er}. We have done this because the sequence of cosines of $\theta_d$ has $d$ sign-changes. It is natural to look at necessary conditions for the existence of endomorphism using other eigenvalues.

It is interesting to ask if matrix-based methods can likely be extended to other highly symmetric families of graphs. One such problem is about the cores of cubelike graphs; a graph is \textsl{cubelike} if it is a Cayley graph of an elementary abelian $2$-group. As the name suggests, the hypercube graphs are examples of cubelike graphs.  In 2008, Ne\v{s}et\v{r}il and \v{S}\'amal \cite{NesSam2008} asked whether the core of a cubelike graph itself cubelike. Despite much partial progress in \cite{ManPivRobRoy2020}, the problem remains open. Though cubelike graphs are not necessarily distance-regular, their adjacency matrices are contained in association schemes. Our homomorphism matrix technique may be extendable to such a setting, for example, to facilitate systematic exploration of candidate non-cubelike cores of cubelike graph. 

Many other questions remain unanswered, including the problems posed in the introduction. For example, analyzing higher-diameter distance-regular graphs, and extending our results to give conditions for classes of distance-regular graphs to be pseudocores would be interesting.

\section*{Acknowledgements} 
We thank Edwin van Dam for helpful discussions about generating feasible intersection arrays of diameter~3 distance-regular graphs.

\newpage 
\appendix
\section{Feasible diameter 3 intersection arrays}\label{sec:appendixtables}
{ \footnotesize
\begin{longtable}{cllrr} 
    \caption{Feasible parameters of primitive distance-regular graphs of diameter 3, which have at least one feasible triple $\alpha,\beta,\gamma$ satisfying the conditions in \cref{thm:smallerdiam},  with $b_0=k\leq 25$. The first column is the number of vertices, written as the sum of orders of the distance partition of a vertex. The second column gives the eigenvalues of the eigenvalues, with multiplicities shown in superscripts. The third column has the intersection parameters $\{b_0,b_1,b_2; c_1,c_2,c_3\}$; the table is sorted lexicographically by this column. The last column contains all triples satisfying the conditions in \cref{thm:smallerdiam}.  Intersection arrays that are not in \cite{BCN} are denoted with ``$-$'' before the first column. \label{tab:prim3}} \\
    \hline
 &   \textbf{Number of vertices} & \textbf{Eigenvalues} & \textbf{Intersection \#s} & \textbf{$\alpha,\beta,\gamma$}\\
    \hline
    \endfirsthead
    
    \multicolumn{5}{c}%
    {{\bfseries Table (continued) }}\\[6pt]
    \hline
  &  \textbf{Number of vertices} & \textbf{Eigenvalues} & \textbf{Intersection \#s} & \textbf{$\alpha,\beta,\gamma$}\\
    \hline
    \endhead
     & $v = 21 = 1 + 4 + 8 + 8$ & $4^{1}\, 2.414^{6}\, -0.414^{6}\, -2^{8}$ & $\{4,2,2;1,1,2\}$ & $(0, 1, 1)$ \\
 & $v = 36 = 1 + 5 + 20 + 10$ & $5^{1}\, 2^{16}\, -1^{10}\, -3^{9}$ & $\{5,4,2;1,1,4\}$ & $(0, 1, 1)$ \\
 & & & & $ (1, 0, 2)$ \\
 & $v = 56 = 1 + 5 + 20 + 30$ & $5^{1}\, 2.414^{20}\, -0.414^{20}\, -3^{15}$ & $\{5,4,3;1,1,2\}$ & $(0, 1, 2)$ \\
 & $v = 52 = 1 + 6 + 18 + 27$ & $6^{1}\, 3.732^{12}\, 0.268^{12}\, -2^{27}$ & $\{6,3,3;1,1,2\}$ & $(0, 1, 2)$ \\
 & & & & $ (1, 0, 3)$ \\
 & $v = 27 = 1 + 6 + 12 + 8$ & $6^{1}\, 3^{6}\, 0^{12}\, -3^{8}$ & $\{6,4,2;1,2,3\}$ & $(0, 1, 1)$ \\
 & & & & $ (1, 0, 2)$ \\
 & $v = 63 = 1 + 6 + 24 + 32$ & $6^{1}\, 3^{21}\, -1^{27}\, -3^{14}$ & $\{6,4,4;1,1,3\}$ & $(0, 2, 2)$ \\
 & $v = 105 = 1 + 8 + 32 + 64$ & $8^{1}\, 5^{20}\, 1^{20}\, -2^{64}$ & $\{8,4,4;1,1,2\}$ & $(0, 1, 3)$ \\
 & & & & $ (1, 0, 4)$ \\
 & $v = 64 = 1 + 9 + 27 + 27$ & $9^{1}\, 5^{9}\, 1^{27}\, -3^{27}$ & $\{9,6,3;1,2,3\}$ & $(0, 1, 2)$ \\
 & & & & $ (1, 0, 3)$ \\
 & $v = 186 = 1 + 10 + 50 + 125$ & $10^{1}\, 6.236^{30}\, 1.764^{30}\, -2^{125}$ & $\{10,5,5;1,1,2\}$ & $(0, 1, 4)$ \\
 & & & & $ (1, 0, 5)$ \\
 & $v = 65 = 1 + 10 + 30 + 24$ & $10^{1}\, 5^{13}\, 0^{26}\, -3^{25}$ & $\{10,6,4;1,2,5\}$ & $(0, 2, 2)$ \\
 & & & & $ (1, 1, 3)$ \\
 & & & & $ (2, 0, 4)$ \\
 & $v = 364 = 1 + 12 + 108 + 243$ & $12^{1}\, 5^{104}\, -1^{168}\, -4^{91}$ & $\{12,9,9;1,1,4\}$ & $(0, 3, 6)$ \\
 & & & & $ (1, 2, 7)$ \\
 & $v = 456 = 1 + 14 + 98 + 343$ & $14^{1}\, 8.646^{56}\, 3.354^{56}\, -2^{343}$ & $\{14,7,7;1,1,2\}$ & $(0, 1, 6)$ \\
 & & & & $ (1, 0, 7)$ \\
$-$ & $v = 255 = 1 + 14 + 112 + 128$ & $14^{1}\, 7^{51}\, -1^{119}\, -3^{84}$ & $\{14,8,8;1,1,7\}$ & $(0, 4, 4)$ \\
 & & & & $ (1, 3, 5)$ \\
 & & & & $ (2, 2, 6)$ \\
 & & & & $ (3, 1, 7)$ \\
 & & & & $ (4, 0, 8)$ \\
 & $v = 135 = 1 + 14 + 56 + 64$ & $14^{1}\, 5^{35}\, -1^{84}\, -7^{15}$ & $\{14,12,8;1,3,7\}$ & $(0, 4, 4)$ \\
 & & & & $ (1, 3, 5)$ \\
 & & & & $ (2, 2, 6)$ \\
 & $v = 855 = 1 + 14 + 168 + 672$ & $14^{1}\, 5^{266}\, -1^{399}\, -5^{189}$ & $\{14,12,12;1,1,3\}$ & $(0, 3, 9)$ \\
 & $v = 160 = 1 + 15 + 90 + 54$ & $15^{1}\, 5^{48}\, -1^{75}\, -5^{36}$ & $\{15,12,6;1,2,10\}$ & $(0, 3, 3)$ \\
 & & & & $ (1, 2, 4)$ \\
 & & & & $ (2, 1, 5)$ \\
 & & & & $ (3, 0, 6)$ \\
 & $v = 506 = 1 + 15 + 210 + 280$ & $15^{1}\, 4^{230}\, -3^{253}\, -8^{22}$ & $\{15,14,12;1,1,9\}$ & $(0, 6, 6)$ \\
 & & & & $ (1, 5, 7)$ \\
 & $v = 657 = 1 + 16 + 128 + 512$ & $16^{1}\, 9.828^{72}\, 4.172^{72}\, -2^{512}$ & $\{16,8,8;1,1,2\}$ & $(0, 1, 7)$ \\
 & & & & $ (1, 0, 8)$ \\
 & $v = 910 = 1 + 18 + 162 + 729$ & $18^{1}\, 11^{90}\, 5^{90}\, -2^{729}$ & $\{18,9,9;1,1,2\}$ & $(0, 1, 8)$ \\
 & & & & $ (1, 0, 9)$ \\
 & $v = 819 = 1 + 18 + 288 + 512$ & $18^{1}\, 5^{324}\, -3^{468}\, -9^{26}$ & $\{18,16,16;1,1,9\}$ & $(0, 8, 8)$ \\
 & $v = 324 = 1 + 19 + 152 + 152$ & $19^{1}\, 7^{57}\, 1^{152}\, -5^{114}$ & $\{19,16,8;1,2,8\}$ & $(0, 2, 6)$ \\
 & & & & $ (1, 1, 7)$ \\
 & & & & $ (2, 0, 8)$ \\
$-$ & $v = 1365 = 1 + 20 + 320 + 1024$ & $20^{1}\, 7^{350}\, -1^{650}\, -5^{364}$ & $\{20,16,16;1,1,5\}$ & $(0, 4, 12)$ \\
 & & & & $ (1, 3, 13)$ \\
 & & & & $ (2, 2, 14)$ \\
 & $v = 792 = 1 + 21 + 420 + 350$ & $21^{1}\, 5^{315}\, -1^{252}\, -6^{224}$ & $\{21,20,10;1,1,12\}$ & $(0, 2, 8)$ \\
 & & & & $ (1, 1, 9)$ \\
 & & & & $ (2, 0, 10)$ \\
 & $v = 512 = 1 + 21 + 210 + 280$ & $21^{1}\, 5^{210}\, -3^{280}\, -11^{21}$ & $\{21,20,16;1,2,12\}$ & $(0, 8, 8)$ \\
 & & & & $ (1, 7, 9)$ \\
 & & & & $ (2, 6, 10)$ \\
$-$ & $v = 1596 = 1 + 22 + 242 + 1331$ & $22^{1}\, 13.317^{132}\, 6.683^{132}\, -2^{1331}$ & $\{22,11,11;1,1,2\}$ & $(0, 1, 10)$ \\
 & & & & $ (1, 0, 11)$ \\
$-$ & $v = 2041 = 1 + 24 + 288 + 1728$ & $24^{1}\, 14.464^{156}\, 7.536^{156}\, -2^{1728}$ & $\{24,12,12;1,1,2\}$ & $(0, 1, 11)$ \\
 & & & & $ (1, 0, 12)$ \\
$-$ & $v = 2457 = 1 + 24 + 384 + 2048$ & $24^{1}\, 11^{324}\, 3^{468}\, -3^{1664}$ & $\{24,16,16;1,1,3\}$ & $(0, 2, 14)$ \\
 & & & & $ (1, 1, 15)$ \\
 & & & & $ (2, 0, 16)$ \\
$-$ & $v = 256 = 1 + 24 + 126 + 105$ & $24^{1}\, 8^{42}\, 0^{168}\, -8^{45}$ & $\{24,21,10;1,4,12\}$ & $(0, 4, 6)$ \\
 & & & & $ (1, 3, 7)$ \\
 & & & & $ (2, 2, 8)$ \\
 & & & & $ (3, 1, 9)$ \\
 & & & & $ (4, 0, 10)$ \\
 & $v = 729 = 1 + 24 + 264 + 440$ & $24^{1}\, 6^{264}\, -3^{440}\, -12^{24}$ & $\{24,22,20;1,2,12\}$ & $(0, 10, 10)$ \\
 & & & & $ (1, 9, 11)$ \\
$-$ & $v = 1176 = 1 + 25 + 400 + 750$ & $25^{1}\, 11^{180}\, 1^{245}\, -3^{750}$ & $\{25,16,15;1,1,8\}$ & $(0, 3, 12)$ \\
 & & & & $ (1, 2, 13)$ \\
 & & & & $ (2, 1, 14)$ \\
 & & & & $ (3, 0, 15)$ \\

    \end{longtable} }

{ \footnotesize
\begin{longtable}{llr} 
    \caption{Feasible parameters of antipodal but not bipartite distance-regular graphs of diameter 3, which have no feasible triple $\alpha,\beta,\gamma$ satisfying the conditions in \cref{thm:smallerdiam}, and thus no homomorphism to a subgraph of diameter $2$,  with $b_0=k\leq 50$. The first column is the number of vertices, written as the sum of orders of the distance partition of a vertex. The second column gives the eigenvalues of the eigenvalues, with multiplicities shown in superscripts. The third column has the intersection parameters $\{b_0,b_1,b_2; c_1,c_2,c_3\}$; the table is sorted lexicographically by this column.  \label{tab:antipodal3}} \\
    \hline
    \textbf{Number of vertices} & \textbf{Eigenvalues} & \textbf{Intersection \#s} \\
    \hline
    \endfirsthead
    
    \multicolumn{3}{c}%
    {{\bfseries Table (continued) }}\\[6pt]
    \hline
    \textbf{Number of vertices} & \textbf{Eigenvalues} & \textbf{Intersection \#s} \\
    \hline
    \endhead
    $v = 15 = 1 + 4 + 8 + 2$ & $4^{1}\, 2^{5}\, -1^{4}\, -2^{5}$ & $\{4,2,1;1,1,4\}$ \\
$v = 35 = 1 + 6 + 24 + 4$ & $6^{1}\, 2.45^{14}\, -1^{6}\, -2.45^{14}$ & $\{6,4,1;1,1,6\}$ \\
$v = 42 = 1 + 6 + 30 + 5$ & $6^{1}\, 2^{21}\, -1^{6}\, -3^{14}$ & $\{6,5,1;1,1,6\}$ \\
$v = 24 = 1 + 7 + 14 + 2$ & $7^{1}\, 2.646^{8}\, -1^{7}\, -2.646^{8}$ & $\{7,4,1;1,2,7\}$ \\
$v = 45 = 1 + 8 + 32 + 4$ & $8^{1}\, 4^{12}\, -1^{8}\, -2^{24}$ & $\{8,4,1;1,1,8\}$ \\
$v = 63 = 1 + 8 + 48 + 6$ & $8^{1}\, 2.828^{27}\, -1^{8}\, -2.828^{27}$ & $\{8,6,1;1,1,8\}$ \\
$v = 27 = 1 + 8 + 16 + 2$ & $8^{1}\, 2^{12}\, -1^{8}\, -4^{6}$ & $\{8,6,1;1,3,8\}$ \\
$v = 40 = 1 + 9 + 27 + 3$ & $9^{1}\, 3^{15}\, -1^{9}\, -3^{15}$ & $\{9,6,1;1,2,9\}$ \\
$v = 33 = 1 + 10 + 20 + 2$ & $10^{1}\, 3.162^{11}\, -1^{10}\, -3.162^{11}$ & $\{10,6,1;1,3,10\}$ \\
$v = 99 = 1 + 10 + 80 + 8$ & $10^{1}\, 3.162^{44}\, -1^{10}\, -3.162^{44}$ & $\{10,8,1;1,1,10\}$ \\
$v = 60 = 1 + 11 + 44 + 4$ & $11^{1}\, 3.317^{24}\, -1^{11}\, -3.317^{24}$ & $\{11,8,1;1,2,11\}$ \\
$v = 143 = 1 + 12 + 120 + 10$ & $12^{1}\, 3.464^{65}\, -1^{12}\, -3.464^{65}$ & $\{12,10,1;1,1,12\}$ \\
$v = 42 = 1 + 13 + 26 + 2$ & $13^{1}\, 3.606^{14}\, -1^{13}\, -3.606^{14}$ & $\{13,8,1;1,4,13\}$ \\
$v = 84 = 1 + 13 + 65 + 5$ & $13^{1}\, 3.606^{35}\, -1^{13}\, -3.606^{35}$ & $\{13,10,1;1,2,13\}$ \\
$v = 195 = 1 + 14 + 168 + 12$ & $14^{1}\, 3.742^{90}\, -1^{14}\, -3.742^{90}$ & $\{14,12,1;1,1,14\}$ \\
$v = 48 = 1 + 15 + 30 + 2$ & $15^{1}\, 5^{12}\, -1^{15}\, -3^{20}$ & $\{15,8,1;1,4,15\}$ \\
$v = 96 = 1 + 15 + 75 + 5$ & $15^{1}\, 5^{30}\, -1^{15}\, -3^{50}$ & $\{15,10,1;1,2,15\}$ \\
$v = 112 = 1 + 15 + 90 + 6$ & $15^{1}\, 3.873^{48}\, -1^{15}\, -3.873^{48}$ & $\{15,12,1;1,2,15\}$ \\
$v = 64 = 1 + 15 + 45 + 3$ & $15^{1}\, 3^{30}\, -1^{15}\, -5^{18}$ & $\{15,12,1;1,4,15\}$ \\
$v = 128 = 1 + 15 + 105 + 7$ & $15^{1}\, 3^{70}\, -1^{15}\, -5^{42}$ & $\{15,14,1;1,2,15\}$ \\
$v = 51 = 1 + 16 + 32 + 2$ & $16^{1}\, 4^{17}\, -1^{16}\, -4^{17}$ & $\{16,10,1;1,5,16\}$ \\
$v = 85 = 1 + 16 + 64 + 4$ & $16^{1}\, 4^{34}\, -1^{16}\, -4^{34}$ & $\{16,12,1;1,3,16\}$ \\
$v = 255 = 1 + 16 + 224 + 14$ & $16^{1}\, 4^{119}\, -1^{16}\, -4^{119}$ & $\{16,14,1;1,1,16\}$ \\
$v = 72 = 1 + 17 + 51 + 3$ & $17^{1}\, 4.123^{27}\, -1^{17}\, -4.123^{27}$ & $\{17,12,1;1,4,17\}$ \\
$v = 144 = 1 + 17 + 119 + 7$ & $17^{1}\, 4.123^{63}\, -1^{17}\, -4.123^{63}$ & $\{17,14,1;1,2,17\}$ \\
$v = 133 = 1 + 18 + 108 + 6$ & $18^{1}\, 6^{38}\, -1^{18}\, -3^{76}$ & $\{18,12,1;1,2,18\}$ \\
$v = 76 = 1 + 18 + 54 + 3$ & $18^{1}\, 3^{38}\, -1^{18}\, -6^{19}$ & $\{18,15,1;1,5,18\}$ \\
$v = 323 = 1 + 18 + 288 + 16$ & $18^{1}\, 4.243^{152}\, -1^{18}\, -4.243^{152}$ & $\{18,16,1;1,1,18\}$ \\
$v = 60 = 1 + 19 + 38 + 2$ & $19^{1}\, 4.359^{20}\, -1^{19}\, -4.359^{20}$ & $\{19,12,1;1,6,19\}$ \\
$v = 180 = 1 + 19 + 152 + 8$ & $19^{1}\, 4.359^{80}\, -1^{19}\, -4.359^{80}$ & $\{19,16,1;1,2,19\}$ \\
$v = 231 = 1 + 20 + 200 + 10$ & $20^{1}\, 10^{35}\, -1^{20}\, -2^{175}$ & $\{20,10,1;1,1,20\}$ \\
$v = 84 = 1 + 20 + 60 + 3$ & $20^{1}\, 4^{35}\, -1^{20}\, -5^{28}$ & $\{20,15,1;1,5,20\}$ \\
$v = 399 = 1 + 20 + 360 + 18$ & $20^{1}\, 4.472^{189}\, -1^{20}\, -4.472^{189}$ & $\{20,18,1;1,1,20\}$ \\
$v = 210 = 1 + 20 + 180 + 9$ & $20^{1}\, 4^{105}\, -1^{20}\, -5^{84}$ & $\{20,18,1;1,2,20\}$ \\
$v = 110 = 1 + 21 + 84 + 4$ & $21^{1}\, 4.583^{44}\, -1^{21}\, -4.583^{44}$ & $\{21,16,1;1,4,21\}$ \\
$v = 220 = 1 + 21 + 189 + 9$ & $21^{1}\, 4.583^{99}\, -1^{21}\, -4.583^{99}$ & $\{21,18,1;1,2,21\}$ \\
$v = 132 = 1 + 21 + 105 + 5$ & $21^{1}\, 3^{77}\, -1^{21}\, -7^{33}$ & $\{21,20,1;1,4,21\}$ \\
$v = 69 = 1 + 22 + 44 + 2$ & $22^{1}\, 4.69^{23}\, -1^{22}\, -4.69^{23}$ & $\{22,14,1;1,7,22\}$ \\
$v = 161 = 1 + 22 + 132 + 6$ & $22^{1}\, 4.69^{69}\, -1^{22}\, -4.69^{69}$ & $\{22,18,1;1,3,22\}$ \\
$v = 483 = 1 + 22 + 440 + 20$ & $22^{1}\, 4.69^{230}\, -1^{22}\, -4.69^{230}$ & $\{22,20,1;1,1,22\}$ \\
$v = 264 = 1 + 23 + 230 + 10$ & $23^{1}\, 4.796^{120}\, -1^{23}\, -4.796^{120}$ & $\{23,20,1;1,2,23\}$ \\
$v = 75 = 1 + 24 + 48 + 2$ & $24^{1}\, 6^{20}\, -1^{24}\, -4^{30}$ & $\{24,14,1;1,7,24\}$ \\
$v = 175 = 1 + 24 + 144 + 6$ & $24^{1}\, 6^{60}\, -1^{24}\, -4^{90}$ & $\{24,18,1;1,3,24\}$ \\
$v = 525 = 1 + 24 + 480 + 20$ & $24^{1}\, 6^{200}\, -1^{24}\, -4^{300}$ & $\{24,20,1;1,1,24\}$ \\
$v = 125 = 1 + 24 + 96 + 4$ & $24^{1}\, 4^{60}\, -1^{24}\, -6^{40}$ & $\{24,20,1;1,5,24\}$ \\
$v = 575 = 1 + 24 + 528 + 22$ & $24^{1}\, 4.899^{275}\, -1^{24}\, -4.899^{275}$ & $\{24,22,1;1,1,24\}$ \\
$v = 78 = 1 + 25 + 50 + 2$ & $25^{1}\, 5^{26}\, -1^{25}\, -5^{26}$ & $\{25,16,1;1,8,25\}$ \\
$v = 104 = 1 + 25 + 75 + 3$ & $25^{1}\, 5^{39}\, -1^{25}\, -5^{39}$ & $\{25,18,1;1,6,25\}$ \\
$v = 156 = 1 + 25 + 125 + 5$ & $25^{1}\, 5^{65}\, -1^{25}\, -5^{65}$ & $\{25,20,1;1,4,25\}$ \\
$v = 312 = 1 + 25 + 275 + 11$ & $25^{1}\, 5^{143}\, -1^{25}\, -5^{143}$ & $\{25,22,1;1,2,25\}$ \\
$v = 135 = 1 + 26 + 104 + 4$ & $26^{1}\, 5.099^{54}\, -1^{26}\, -5.099^{54}$ & $\{26,20,1;1,5,26\}$ \\
$v = 675 = 1 + 26 + 624 + 24$ & $26^{1}\, 5.099^{324}\, -1^{26}\, -5.099^{324}$ & $\{26,24,1;1,1,26\}$ \\
$v = 140 = 1 + 27 + 108 + 4$ & $27^{1}\, 9^{28}\, -1^{27}\, -3^{84}$ & $\{27,16,1;1,4,27\}$ \\
$v = 280 = 1 + 27 + 243 + 9$ & $27^{1}\, 9^{63}\, -1^{27}\, -3^{189}$ & $\{27,18,1;1,2,27\}$ \\
$v = 364 = 1 + 27 + 324 + 12$ & $27^{1}\, 5.196^{168}\, -1^{27}\, -5.196^{168}$ & $\{27,24,1;1,2,27\}$ \\
$v = 112 = 1 + 27 + 81 + 3$ & $27^{1}\, 3^{63}\, -1^{27}\, -9^{21}$ & $\{27,24,1;1,8,27\}$ \\
$v = 87 = 1 + 28 + 56 + 2$ & $28^{1}\, 5.292^{29}\, -1^{28}\, -5.292^{29}$ & $\{28,18,1;1,9,28\}$ \\
$v = 348 = 1 + 28 + 308 + 11$ & $28^{1}\, 7^{116}\, -1^{28}\, -4^{203}$ & $\{28,22,1;1,2,28\}$ \\
$v = 261 = 1 + 28 + 224 + 8$ & $28^{1}\, 5.292^{116}\, -1^{28}\, -5.292^{116}$ & $\{28,24,1;1,3,28\}$ \\
$v = 783 = 1 + 28 + 728 + 26$ & $28^{1}\, 5.292^{377}\, -1^{28}\, -5.292^{377}$ & $\{28,26,1;1,1,28\}$ \\
$v = 210 = 1 + 29 + 174 + 6$ & $29^{1}\, 5.385^{90}\, -1^{29}\, -5.385^{90}$ & $\{29,24,1;1,4,29\}$ \\
$v = 420 = 1 + 29 + 377 + 13$ & $29^{1}\, 5.385^{195}\, -1^{29}\, -5.385^{195}$ & $\{29,26,1;1,2,29\}$ \\
$v = 899 = 1 + 30 + 840 + 28$ & $30^{1}\, 5.477^{434}\, -1^{30}\, -5.477^{434}$ & $\{30,28,1;1,1,30\}$ \\
$v = 96 = 1 + 31 + 62 + 2$ & $31^{1}\, 5.568^{32}\, -1^{31}\, -5.568^{32}$ & $\{31,20,1;1,10,31\}$ \\
$v = 160 = 1 + 31 + 124 + 4$ & $31^{1}\, 5.568^{64}\, -1^{31}\, -5.568^{64}$ & $\{31,24,1;1,6,31\}$ \\
$v = 480 = 1 + 31 + 434 + 14$ & $31^{1}\, 5.568^{224}\, -1^{31}\, -5.568^{224}$ & $\{31,28,1;1,2,31\}$ \\
$v = 99 = 1 + 32 + 64 + 2$ & $32^{1}\, 8^{22}\, -1^{32}\, -4^{44}$ & $\{32,18,1;1,9,32\}$ \\
$v = 297 = 1 + 32 + 256 + 8$ & $32^{1}\, 8^{88}\, -1^{32}\, -4^{176}$ & $\{32,24,1;1,3,32\}$ \\
$v = 891 = 1 + 32 + 832 + 26$ & $32^{1}\, 8^{286}\, -1^{32}\, -4^{572}$ & $\{32,26,1;1,1,32\}$ \\
$v = 165 = 1 + 32 + 128 + 4$ & $32^{1}\, 4^{88}\, -1^{32}\, -8^{44}$ & $\{32,28,1;1,7,32\}$ \\
$v = 1023 = 1 + 32 + 960 + 30$ & $32^{1}\, 5.657^{495}\, -1^{32}\, -5.657^{495}$ & $\{32,30,1;1,1,32\}$ \\
$v = 231 = 1 + 32 + 192 + 6$ & $32^{1}\, 4^{132}\, -1^{32}\, -8^{66}$ & $\{32,30,1;1,5,32\}$ \\
$v = 136 = 1 + 33 + 99 + 3$ & $33^{1}\, 5.745^{51}\, -1^{33}\, -5.745^{51}$ & $\{33,24,1;1,8,33\}$ \\
$v = 272 = 1 + 33 + 231 + 7$ & $33^{1}\, 5.745^{119}\, -1^{33}\, -5.745^{119}$ & $\{33,28,1;1,4,33\}$ \\
$v = 544 = 1 + 33 + 495 + 15$ & $33^{1}\, 5.745^{255}\, -1^{33}\, -5.745^{255}$ & $\{33,30,1;1,2,33\}$ \\
$v = 105 = 1 + 34 + 68 + 2$ & $34^{1}\, 5.831^{35}\, -1^{34}\, -5.831^{35}$ & $\{34,22,1;1,11,34\}$ \\
$v = 385 = 1 + 34 + 340 + 10$ & $34^{1}\, 5.831^{175}\, -1^{34}\, -5.831^{175}$ & $\{34,30,1;1,3,34\}$ \\
$v = 1155 = 1 + 34 + 1088 + 32$ & $34^{1}\, 5.831^{560}\, -1^{34}\, -5.831^{560}$ & $\{34,32,1;1,1,34\}$ \\
$v = 144 = 1 + 35 + 105 + 3$ & $35^{1}\, 7^{45}\, -1^{35}\, -5^{63}$ & $\{35,24,1;1,8,35\}$ \\
$v = 108 = 1 + 35 + 70 + 2$ & $35^{1}\, 5^{42}\, -1^{35}\, -7^{30}$ & $\{35,24,1;1,12,35\}$ \\
$v = 288 = 1 + 35 + 245 + 7$ & $35^{1}\, 7^{105}\, -1^{35}\, -5^{147}$ & $\{35,28,1;1,4,35\}$ \\
$v = 576 = 1 + 35 + 525 + 15$ & $35^{1}\, 7^{225}\, -1^{35}\, -5^{315}$ & $\{35,30,1;1,2,35\}$ \\
$v = 216 = 1 + 35 + 175 + 5$ & $35^{1}\, 5^{105}\, -1^{35}\, -7^{75}$ & $\{35,30,1;1,6,35\}$ \\
$v = 612 = 1 + 35 + 560 + 16$ & $35^{1}\, 5.916^{288}\, -1^{35}\, -5.916^{288}$ & $\{35,32,1;1,2,35\}$ \\
$v = 324 = 1 + 35 + 280 + 8$ & $35^{1}\, 5^{168}\, -1^{35}\, -7^{120}$ & $\{35,32,1;1,4,35\}$ \\
$v = 648 = 1 + 35 + 595 + 17$ & $35^{1}\, 5^{357}\, -1^{35}\, -7^{255}$ & $\{35,34,1;1,2,35\}$ \\
$v = 962 = 1 + 36 + 900 + 25$ & $36^{1}\, 12^{185}\, -1^{36}\, -3^{740}$ & $\{36,25,1;1,1,36\}$ \\
$v = 185 = 1 + 36 + 144 + 4$ & $36^{1}\, 6^{74}\, -1^{36}\, -6^{74}$ & $\{36,28,1;1,7,36\}$ \\
$v = 259 = 1 + 36 + 216 + 6$ & $36^{1}\, 6^{111}\, -1^{36}\, -6^{111}$ & $\{36,30,1;1,5,36\}$ \\
$v = 1295 = 1 + 36 + 1224 + 34$ & $36^{1}\, 6^{629}\, -1^{36}\, -6^{629}$ & $\{36,34,1;1,1,36\}$ \\
$v = 114 = 1 + 37 + 74 + 2$ & $37^{1}\, 6.083^{38}\, -1^{37}\, -6.083^{38}$ & $\{37,24,1;1,12,37\}$ \\
$v = 228 = 1 + 37 + 185 + 5$ & $37^{1}\, 6.083^{95}\, -1^{37}\, -6.083^{95}$ & $\{37,30,1;1,6,37\}$ \\
$v = 342 = 1 + 37 + 296 + 8$ & $37^{1}\, 6.083^{152}\, -1^{37}\, -6.083^{152}$ & $\{37,32,1;1,4,37\}$ \\
$v = 684 = 1 + 37 + 629 + 17$ & $37^{1}\, 6.083^{323}\, -1^{37}\, -6.083^{323}$ & $\{37,34,1;1,2,37\}$ \\
$v = 1443 = 1 + 38 + 1368 + 36$ & $38^{1}\, 6.164^{702}\, -1^{38}\, -6.164^{702}$ & $\{38,36,1;1,1,38\}$ \\
$v = 280 = 1 + 39 + 234 + 6$ & $39^{1}\, 13^{45}\, -1^{39}\, -3^{195}$ & $\{39,24,1;1,4,39\}$ \\
$v = 760 = 1 + 39 + 702 + 18$ & $39^{1}\, 6.245^{360}\, -1^{39}\, -6.245^{360}$ & $\{39,36,1;1,2,39\}$ \\
$v = 123 = 1 + 40 + 80 + 2$ & $40^{1}\, 6.325^{41}\, -1^{40}\, -6.325^{41}$ & $\{40,26,1;1,13,40\}$ \\
$v = 533 = 1 + 40 + 480 + 12$ & $40^{1}\, 6.325^{246}\, -1^{40}\, -6.325^{246}$ & $\{40,36,1;1,3,40\}$ \\
$v = 1599 = 1 + 40 + 1520 + 38$ & $40^{1}\, 6.325^{779}\, -1^{40}\, -6.325^{779}$ & $\{40,38,1;1,1,40\}$ \\
$v = 574 = 1 + 40 + 520 + 13$ & $40^{1}\, 5^{328}\, -1^{40}\, -8^{205}$ & $\{40,39,1;1,3,40\}$ \\
$v = 168 = 1 + 41 + 123 + 3$ & $41^{1}\, 6.403^{63}\, -1^{41}\, -6.403^{63}$ & $\{41,30,1;1,10,41\}$ \\
$v = 210 = 1 + 41 + 164 + 4$ & $41^{1}\, 6.403^{84}\, -1^{41}\, -6.403^{84}$ & $\{41,32,1;1,8,41\}$ \\
$v = 420 = 1 + 41 + 369 + 9$ & $41^{1}\, 6.403^{189}\, -1^{41}\, -6.403^{189}$ & $\{41,36,1;1,4,41\}$ \\
$v = 840 = 1 + 41 + 779 + 19$ & $41^{1}\, 6.403^{399}\, -1^{41}\, -6.403^{399}$ & $\{41,38,1;1,2,41\}$ \\
$v = 1720 = 1 + 42 + 1638 + 39$ & $42^{1}\, 7^{774}\, -1^{42}\, -6^{903}$ & $\{42,39,1;1,1,42\}$ \\
$v = 602 = 1 + 42 + 546 + 13$ & $42^{1}\, 6^{301}\, -1^{42}\, -7^{258}$ & $\{42,39,1;1,3,42\}$ \\
$v = 1763 = 1 + 42 + 1680 + 40$ & $42^{1}\, 6.481^{860}\, -1^{42}\, -6.481^{860}$ & $\{42,40,1;1,1,42\}$ \\
$v = 132 = 1 + 43 + 86 + 2$ & $43^{1}\, 6.557^{44}\, -1^{43}\, -6.557^{44}$ & $\{43,28,1;1,14,43\}$ \\
$v = 308 = 1 + 43 + 258 + 6$ & $43^{1}\, 6.557^{132}\, -1^{43}\, -6.557^{132}$ & $\{43,36,1;1,6,43\}$ \\
$v = 924 = 1 + 43 + 860 + 20$ & $43^{1}\, 6.557^{440}\, -1^{43}\, -6.557^{440}$ & $\{43,40,1;1,2,43\}$ \\
$v = 135 = 1 + 44 + 88 + 2$ & $44^{1}\, 11^{24}\, -1^{44}\, -4^{66}$ & $\{44,24,1;1,12,44\}$ \\
$v = 180 = 1 + 44 + 132 + 3$ & $44^{1}\, 11^{36}\, -1^{44}\, -4^{99}$ & $\{44,27,1;1,9,44\}$ \\
$v = 270 = 1 + 44 + 220 + 5$ & $44^{1}\, 11^{60}\, -1^{44}\, -4^{165}$ & $\{44,30,1;1,6,44\}$ \\
$v = 405 = 1 + 44 + 352 + 8$ & $44^{1}\, 11^{96}\, -1^{44}\, -4^{264}$ & $\{44,32,1;1,4,44\}$ \\
$v = 540 = 1 + 44 + 484 + 11$ & $44^{1}\, 11^{132}\, -1^{44}\, -4^{363}$ & $\{44,33,1;1,3,44\}$ \\
$v = 810 = 1 + 44 + 748 + 17$ & $44^{1}\, 11^{204}\, -1^{44}\, -4^{561}$ & $\{44,34,1;1,2,44\}$ \\
$v = 1620 = 1 + 44 + 1540 + 35$ & $44^{1}\, 11^{420}\, -1^{44}\, -4^{1155}$ & $\{44,35,1;1,1,44\}$ \\
$v = 225 = 1 + 44 + 176 + 4$ & $44^{1}\, 4^{132}\, -1^{44}\, -11^{48}$ & $\{44,40,1;1,10,44\}$ \\
$v = 1935 = 1 + 44 + 1848 + 42$ & $44^{1}\, 6.633^{945}\, -1^{44}\, -6.633^{945}$ & $\{44,42,1;1,1,44\}$ \\
$v = 184 = 1 + 45 + 135 + 3$ & $45^{1}\, 15^{23}\, -1^{45}\, -3^{115}$ & $\{45,24,1;1,8,45\}$ \\
$v = 736 = 1 + 45 + 675 + 15$ & $45^{1}\, 15^{115}\, -1^{45}\, -3^{575}$ & $\{45,30,1;1,2,45\}$ \\
$v = 506 = 1 + 45 + 450 + 10$ & $45^{1}\, 6.708^{230}\, -1^{45}\, -6.708^{230}$ & $\{45,40,1;1,4,45\}$ \\
$v = 1012 = 1 + 45 + 945 + 21$ & $45^{1}\, 6.708^{483}\, -1^{45}\, -6.708^{483}$ & $\{45,42,1;1,2,45\}$ \\
$v = 368 = 1 + 45 + 315 + 7$ & $45^{1}\, 5^{207}\, -1^{45}\, -9^{115}$ & $\{45,42,1;1,6,45\}$ \\
$v = 141 = 1 + 46 + 92 + 2$ & $46^{1}\, 6.782^{47}\, -1^{46}\, -6.782^{47}$ & $\{46,30,1;1,15,46\}$ \\
$v = 235 = 1 + 46 + 184 + 4$ & $46^{1}\, 6.782^{94}\, -1^{46}\, -6.782^{94}$ & $\{46,36,1;1,9,46\}$ \\
$v = 423 = 1 + 46 + 368 + 8$ & $46^{1}\, 6.782^{188}\, -1^{46}\, -6.782^{188}$ & $\{46,40,1;1,5,46\}$ \\
$v = 705 = 1 + 46 + 644 + 14$ & $46^{1}\, 6.782^{329}\, -1^{46}\, -6.782^{329}$ & $\{46,42,1;1,3,46\}$ \\
$v = 2115 = 1 + 46 + 2024 + 44$ & $46^{1}\, 6.782^{1034}\, -1^{46}\, -6.782^{1034}$ & $\{46,44,1;1,1,46\}$ \\
$v = 1104 = 1 + 47 + 1034 + 22$ & $47^{1}\, 6.856^{528}\, -1^{47}\, -6.856^{528}$ & $\{47,44,1;1,2,47\}$ \\
$v = 147 = 1 + 48 + 96 + 2$ & $48^{1}\, 8^{42}\, -1^{48}\, -6^{56}$ & $\{48,30,1;1,15,48\}$ \\
$v = 637 = 1 + 48 + 576 + 12$ & $48^{1}\, 12^{147}\, -1^{48}\, -4^{441}$ & $\{48,36,1;1,3,48\}$ \\
$v = 245 = 1 + 48 + 192 + 4$ & $48^{1}\, 8^{84}\, -1^{48}\, -6^{112}$ & $\{48,36,1;1,9,48\}$ \\
$v = 441 = 1 + 48 + 384 + 8$ & $48^{1}\, 8^{168}\, -1^{48}\, -6^{224}$ & $\{48,40,1;1,5,48\}$ \\
$v = 735 = 1 + 48 + 672 + 14$ & $48^{1}\, 8^{294}\, -1^{48}\, -6^{392}$ & $\{48,42,1;1,3,48\}$ \\
$v = 343 = 1 + 48 + 288 + 6$ & $48^{1}\, 6^{168}\, -1^{48}\, -8^{126}$ & $\{48,42,1;1,7,48\}$ \\
$v = 2205 = 1 + 48 + 2112 + 44$ & $48^{1}\, 8^{924}\, -1^{48}\, -6^{1232}$ & $\{48,44,1;1,1,48\}$ \\
$v = 245 = 1 + 48 + 192 + 4$ & $48^{1}\, 4^{147}\, -1^{48}\, -12^{49}$ & $\{48,44,1;1,11,48\}$ \\
$v = 2303 = 1 + 48 + 2208 + 46$ & $48^{1}\, 6.928^{1127}\, -1^{48}\, -6.928^{1127}$ & $\{48,46,1;1,1,48\}$ \\
$v = 150 = 1 + 49 + 98 + 2$ & $49^{1}\, 7^{50}\, -1^{49}\, -7^{50}$ & $\{49,32,1;1,16,49\}$ \\
$v = 200 = 1 + 49 + 147 + 3$ & $49^{1}\, 7^{75}\, -1^{49}\, -7^{75}$ & $\{49,36,1;1,12,49\}$ \\
$v = 300 = 1 + 49 + 245 + 5$ & $49^{1}\, 7^{125}\, -1^{49}\, -7^{125}$ & $\{49,40,1;1,8,49\}$ \\
$v = 400 = 1 + 49 + 343 + 7$ & $49^{1}\, 7^{175}\, -1^{49}\, -7^{175}$ & $\{49,42,1;1,6,49\}$ \\
$v = 600 = 1 + 49 + 539 + 11$ & $49^{1}\, 7^{275}\, -1^{49}\, -7^{275}$ & $\{49,44,1;1,4,49\}$ \\
$v = 1200 = 1 + 49 + 1127 + 23$ & $49^{1}\, 7^{575}\, -1^{49}\, -7^{575}$ & $\{49,46,1;1,2,49\}$ \\
$v = 204 = 1 + 50 + 150 + 3$ & $50^{1}\, 10^{51}\, -1^{50}\, -5^{102}$ & $\{50,33,1;1,11,50\}$ \\
$v = 153 = 1 + 50 + 100 + 2$ & $50^{1}\, 5^{68}\, -1^{50}\, -10^{34}$ & $\{50,36,1;1,18,50\}$ \\
$v = 561 = 1 + 50 + 500 + 10$ & $50^{1}\, 10^{170}\, -1^{50}\, -5^{340}$ & $\{50,40,1;1,4,50\}$ \\
$v = 1122 = 1 + 50 + 1050 + 21$ & $50^{1}\, 10^{357}\, -1^{50}\, -5^{714}$ & $\{50,42,1;1,2,50\}$ \\
$v = 357 = 1 + 50 + 300 + 6$ & $50^{1}\, 7.071^{153}\, -1^{50}\, -7.071^{153}$ & $\{50,42,1;1,7,50\}$ \\
$v = 2244 = 1 + 50 + 2150 + 43$ & $50^{1}\, 10^{731}\, -1^{50}\, -5^{1462}$ & $\{50,43,1;1,1,50\}$ \\
$v = 306 = 1 + 50 + 250 + 5$ & $50^{1}\, 5^{170}\, -1^{50}\, -10^{85}$ & $\{50,45,1;1,9,50\}$ \\
$v = 2499 = 1 + 50 + 2400 + 48$ & $50^{1}\, 7.071^{1224}\, -1^{50}\, -7.071^{1224}$ & $\{50,48,1;1,1,50\}$ \\
$v = 459 = 1 + 50 + 400 + 8$ & $50^{1}\, 5^{272}\, -1^{50}\, -10^{136}$ & $\{50,48,1;1,6,50\}$ \\

    \end{longtable} }

\end{document}